\newtheorem{thm}{Theorem}[section]
\newtheorem{lemma}[thm]{Lemma}
\newtheorem{prop}[thm]{Proposition}
\newtheorem{cor}[thm]{Corollary}
\newcommand{\m}[0]{\mathcal{M}}
\newcommand{\eq}{\operatorname{eq}}
\newcommand{\rb}{\operatorname{rb}}
\newcommand{\ord}{\operatorname{ord}}
\title{Rainbow-Free Colorings and Rainbow Numbers for $x-y=z^k$}
\author{Katie Ansaldi \and Gabriel Cowley \and  Eric Green  \and  Kihyun Kim  \and JT Rapp}
\date{\today}
\begin{document}

\maketitle

\abstract{ An exact r-coloring of a set $S$ is a surjective function $c:S \rightarrow \{1, 2, \ldots,r\}$. A rainbow solution to an equation over $S$ is a solution such that all components are a different color. We prove that every 3-coloring of $\mathbb{N}$ with an upper density greater than $(4^s-1)/(3 \cdot 4^s)$ contains a rainbow solution to $x-y=z^k$.   The rainbow number for an equation in the set $S$ is the smallest integer $r$ such that every exact $r$-coloring has a rainbow solution. We compute the rainbow numbers of $\mathbb{Z}_p$ for the equation $x-y=z^k$, where  $p$ is prime and $k\geq 2$. } 
\section{Introduction} 
Given a set $S$, a coloring is a function that assigns a color to each element of $S$. While Ramsey theory is the study of the existence of monochromatic subsets, anti-Ramsey theory is the study of rainbow subsets. A subset $X \subseteq S$ is a \emph{rainbow} subset if each element in $X$ is assigned a distinct color. 
For example, in the equation $x_1+x_2=x_3$, a \emph{rainbow solution} is a solution $\{a,b, a+b\}$ in a set, for instance $\mathbb{Z}$ or $\mathbb{Z}_n$, such that each of $a$, $b$, $a+b$ are assigned a distinct color. A coloring is said to be rainbow-free for an equation if no rainbow solutions exist. Several papers have looked at the existence of rainbow-free 3 colorings for linear equations over $\mathbb{Z}$ and $\mathbb{Z}_n$ in \cite{JungicVeselinLicht}, \cite{Axenovich}, \cite{LlanoMontejano},  and \cite{HuicocheaMontejano}. In \cite{Zhan}, Zhan studied the existence of rainbow-free colorings for the equation $x-y=z^2$ over $\mathbb{Z}$ with certain density conditions.

The rainbow number of $S$ for $\eq$ denoted $\rb(S,\eq)$ is the smallest number of colors such that for every exact $\rb(S,\eq)$-coloring of $S$, there exists a rainbow solution to $\eq$. 
Several papers have looked at rainbow numbers over $\mathbb{Z}_n$.  For instance, the authors in \cite{Butler} looked at anti-van der Waerden numbers over both $\mathbb{Z}$ and $\mathbb{Z}_n$. In \cite{Bevilacqua}, the authors considered rainbow numbers of the equation $x_1+x_2=kx_3$ in $\mathbb{Z}_p$. The authors in \cite{Ansaldi} computed rainbow numbers of linear equations $a_1x_1 + a_2x_2 +a_3x_3=b$ over $\mathbb{Z}_n$. 
In \cite{Fallon}, they consider rainbow numbers of $x_1+x_2=x_3 $ over subsets $[m] \times [n]$ of $\mathbb{Z} \times \mathbb{Z}.$

In this paper, we generalize the results of Zhan in\cite{Zhan} to classify rainbow-free 3-colorings for the equation $x-y=z^k$ for $k \geq 2$.  We also compute the rainbow-number of $\mathbb{Z}_n$ for $x-y=z^k$.

In Section 2, we establish some preliminary notation and prove results on rainbow solutions to the equation $x-y=z^k$ in a 3-coloring of the natural numbers. The first result extends Theorem 1 of \cite{Zhan} to equations of the form $x-y=z^k$ for $k\geq 2$. 
In Section 3 we show the existence of rainbow solutions to the equation $x-y=z^k$ in three-colorings of the natural numbers that satisfy a density condition on the sizes of the color classes. In Section 4, we consider the modular case. We establish bounds on the color classes in rainbow-free colorings of $\mathbb{Z}_n$. We then establish a connection between rainbow-free colorings and the function digraph for the function $f(x) = x^k$ whose edges are of the form $(x,x^k)$. Such digraphs have been well-studies by many people including \cite{Blanton}, \cite{DresdenTu}, \cite{Lucheta}, \cite{SomerKrizek}, and \cite{Wilson}.   
   Using these digraphs, we compute the rainbow numbers of $\mathbb{Z}_p$ for $x-y=z^k$ when $k$ is prime.

\section{Rainbow-free $3$-colorings of $x-y=z^k$ in $\mathbb{N}$}
\label{section:3colorextend}

In this section, we employ the same approach as Zhan to extend \cite{Zhan} 
 Theorem 1 for the quadratic equation $x-y=z^2$ to equations of the form $x-y=z^k$, where $k\geq 2$.

Let $c: \mathbb{N} \rightarrow \{R, B, G\}$ be a 3-coloring of the set of natural numbers, where $\mathcal{R}, \mathcal{B}, \mathcal{G}$ denote the color classes of red, blue, and green, respectively. For any subset $S$ of $\mathbb{N}$ and any $n \in \mathbb{N}$, define $S(n)= |[n] \cap  S|$; hence $\mathcal R(n) = |[n] \cap \mathcal R|$. We define $\mathcal B$ and $\mathcal G$ in a similar way. A \emph{rainbow solution} to the equation $x-y= z^k$ with respect to the coloring $c$ is any ordered triple of positive integers $(a_1, a_2, a_3)$, all of different colors  such that $a_1 -a_2 = a_3^k$. We say that $c$ is  \emph{rainbow-free} for $x-y=z^k$ if there is no rainbow solution to the equation $x-y=z^k$ with respect to $c$.

A string of length $\ell$ at position $i$ consists of numbers $i$, $i+1$, $i+2, \ldots, i+\ell -1$ where $i, \ell \in \mathbb{N}$. A string is \emph{monochromatic} if it contains only one color. Similarly a string is \emph{bichromatic} if it contains exactly two colors. We say that a color is \emph{dominant} if every bichromatic string contains that color. We see that if a dominant color exists for a 3-coloring it must be unique and nondominant colors cannot be adjacent.
A string is an $R$-monochromatic string of length $\ell$ at position $i$   if $c(i-1)$, $c(i+\ell) \neq R$. We similarly define $B$-monochromatic strings and $G$-monochromatic strings. 

The following lemmas are direct extensions of \cite{Zhan} to the equation $x-y=z^k$. We include their proofs here for completeness.

\begin{lemma}
\label{lemma:reddominant}
Let $c: \mathbb{N} \rightarrow [r]$ be an exact rainbow-free coloring for $x-y=z^k$. Then $c(1)$ is dominant. 
\end{lemma}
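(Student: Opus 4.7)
The plan is to exploit the case $z=1$ of the equation. Setting $z=1$ reduces $x-y=z^k$ to $x-y=1$, so for every $n\in\mathbb{N}$ the triple $(n+1,n,1)$ is a solution of $x-y=z^k$. I would begin the proof by recording this observation.

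Next I would turn this into a local constraint on consecutive colors. Because $c$ is rainbow-free, the colors $c(n+1)$, $c(n)$, $c(1)$ cannot be three pairwise distinct values. So for every $n$ we must be in one of the following situations: either $c(n)=c(n+1)$, or $c(n)\neq c(n+1)$ and $c(1)\in\{c(n),c(n+1)\}$. In particular, whenever two adjacent positions carry different colors, one of those colors is $c(1)$.

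Finally I would leverage this to handle arbitrary bichromatic strings. Suppose $s = i, i+1, \ldots, i+\ell-1$ is a bichromatic string, using colors $\{A,B\}$ with $A\neq B$. Because both colors actually appear in $s$, somewhere inside $s$ there must be a transition, i.e.\ an index $j$ with $i\le j<i+\ell-1$ and $c(j)\neq c(j+1)$; both $c(j)$ and $c(j+1)$ belong to $\{A,B\}$. Applying the consecutive-pair constraint to this $j$ forces $c(1)\in\{c(j),c(j+1)\}\subseteq\{A,B\}$, so the color $c(1)$ appears in $s$. Hence every bichromatic string contains $c(1)$, which is the definition of $c(1)$ being dominant.

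There is no real obstacle here; the only thing to be careful about is making sure the ``transition'' step is spelled out, since a bichromatic string is only assumed to use exactly two colors, not to alternate them. Once one notes that the set of positions in $s$ colored $A$ and the set colored $B$ are both nonempty, a consecutive pair of differing colors must exist within $s$, and the $z=1$ argument finishes the proof.
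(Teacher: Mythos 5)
Your proposal is correct and matches the paper's argument: both use the solution $(n+1,n,1)$ to force $c(1)\in\{c(n),c(n+1)\}$ whenever adjacent colors differ. The only difference is that you spell out the (routine) step from this consecutive-pair constraint to the bichromatic-string definition of dominance, which the paper leaves implicit.
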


\begin{proof}
Without loss of generality, assume $c(1) $ is red. It suffices to show that if $c(i) \neq c(i+1)$, then either $c(i)=R $ or $c(i+1)=R$. Since $(i+1, i, 1)$ is a solution to $x-y=z^k$, either $c(i) =R$ or $c(i+1)=R$. Thus, red is a dominant color, as desired. 

\end{proof}

Throughout the rest of the paper we will assume that $R$ is a dominant color. 
Here we establish that  that $B$- or $G$-monochromatic strings remain monochromatic after moving $j^k$ positions.

\begin{lemma} 
\label{lemma:nondominantstring}
Let $c: \mathbb{N} \rightarrow \{R, G, B\}$ be rainbow-free for $x-y=z^k$ with dominant color $R$. If $c(j)= B$, then for any monochromatic string as position $i$ of length $\ell$ of color $G$, the string of position $i \pm j^k$ of length $\ell$ is monochromatic of color either $B$ or $G$. 
\end{lemma}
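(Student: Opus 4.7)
My plan is to show that each position in the shifted string must be colored $B$ or $G$ (never $R$), and then use the fact that non-dominant colors cannot be adjacent to force the whole shifted string to be monochromatic.

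First I would handle the $+j^k$ shift. For each $m \in \{0, 1, \ldots, \ell-1\}$, observe that the ordered triple
\[
\bigl(i+j^k+m,\; i+m,\; j\bigr)
\]
is a solution to $x-y=z^k$, since $(i+j^k+m) - (i+m) = j^k$. The second coordinate has color $G$ (by hypothesis on the monochromatic string at position $i$) and the third coordinate has color $B$ (by hypothesis $c(j)=B$). Because $c$ is rainbow-free, the first coordinate cannot have a third distinct color, so $c(i+j^k+m) \in \{B, G\}$ for every $m$.

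Next, I would invoke the structural fact from Lemma~\ref{lemma:reddominant} and the accompanying discussion that, since $R$ is dominant, the non-dominant colors $B$ and $G$ cannot be adjacent (otherwise the triple $(p+1, p, 1)$ with $c(1)=R$, $c(p)\in\{B,G\}$, $c(p+1)\in\{B,G\}\setminus\{c(p)\}$ would be a rainbow solution). Since the shifted string at positions $i+j^k, i+j^k+1, \ldots, i+j^k+\ell-1$ uses only colors from $\{B,G\}$, and no $B$ can sit next to a $G$, a straightforward induction on consecutive positions shows that all $\ell$ positions share a common color. Hence this string is monochromatic of color $B$ or $G$, as claimed.

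The $-j^k$ case is symmetric: apply the same argument to the triple $(i+m,\; i-j^k+m,\; j)$, which still witnesses $x-y=z^k$, yielding that every position of the shifted string at $i-j^k$ lies in $\{B,G\}$ and then concluding monochromaticity by the non-adjacency of non-dominant colors. The main subtlety — the only place something could go wrong — is ensuring that the triples produced really are valid rainbow candidates (i.e.\ that we have genuinely three components whose colors must be compared); this is immediate here because $B \neq G$ already accounts for two distinct colors, so rainbow-freeness directly constrains the third. No density or size conditions are needed, and the argument is uniform in $k$.
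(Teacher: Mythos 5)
Your proposal is correct and follows essentially the same argument as the paper: use the solutions $(i+m+j^k,\, i+m,\, j)$ (resp. $(i+m,\, i-j^k+m,\, j)$) together with rainbow-freeness to force the shifted positions into $\{B,G\}$, then use dominance of $R$ (nondominant colors cannot be adjacent) to conclude the shifted string is monochromatic. The only cosmetic difference is that you re-derive the non-adjacency of $B$ and $G$ from the triple $(p+1,p,1)$, whereas the paper simply cites that fact as a consequence of $R$ being dominant.
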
 

\begin{proof}
Suppose $c(j)=B$ and  $c(i) =c(i+1) = \ldots = c(i+ \ell -1) = G$.
For all $0 \leq h \leq \ell-1$, $(i +h + j^k, i+h, j)$ is a solution to $x-y=z^k$, and so $c(i+h + j^k ) \in \{B, G\}$. As $B$ and $G$ are nondominant colors, the string at position $i+j^k$ of length $\ell$ must be monochromatic of either blue or green. Since $(i+h, i+h-j^k, j)$ is a solution to $x-y=z^k$, a similar argument shows that the strings at position $i-j^k $ of length $\ell$ are monochromatic of color either $B$ or $G$. 
\end{proof}

\begin{lemma} [\cite{Zhan}, Lemma 5]
\label{Lemma:smalldistance}
Suppose some set $S\subset \mathbb{N}$ satisfies $\displaystyle \lim_{n\to\infty} \sup (S(n)-\frac{n}{n_0})=\infty$ for some integer $n_0 \geq 2$. then there exists a $d \leq n_0-1$ such that for any $i$, there exists $j>i$ such that $j$ and $j+d$ are both elements of $S$.
\end{lemma}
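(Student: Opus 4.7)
The plan is to analyze the gap sequence of $S$ and then apply pigeonhole to the finite set of possible small gaps. The hypothesis supplies a sequence $n_1 < n_2 < \cdots$ along which $S(n_t) - n_t/n_0 \to \infty$. I would enumerate $S$ in increasing order as $s_1 < s_2 < \cdots$ and consider the consecutive differences $g_i = s_{i+1} - s_i$.

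Next I would bound the number of small gaps from below. Since
$$g_1 + g_2 + \cdots + g_{S(n_t)-1} = s_{S(n_t)} - s_1 \leq n_t,$$
at most $n_t / n_0$ of these gaps can satisfy $g_i \geq n_0$. Hence the number of gaps with $g_i \in \{1, 2, \ldots, n_0 - 1\}$ among the first $S(n_t)-1$ is at least
$$\bigl(S(n_t) - 1\bigr) - \frac{n_t}{n_0} \;=\; \left(S(n_t) - \frac{n_t}{n_0}\right) - 1,$$
which tends to infinity by the hypothesis on $S$.

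Pigeonhole then finishes the argument. Because every small gap takes a value in the finite set $\{1, \ldots, n_0 - 1\}$ and there are unboundedly many small gaps, some specific $d \in \{1, \ldots, n_0 - 1\}$ must occur as a gap value infinitely often. Fixing such a $d$, and given any prescribed threshold $i$, I can pick an index $m$ with $s_{m+1} - s_m = d$ and $s_m > i$ (since both the set of such indices $m$ and the values $s_m$ are unbounded). Setting $j := s_m$ yields $j > i$ with $j, j+d \in S$, as desired.

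I do not expect a serious obstacle here; the only place to be careful is the bookkeeping of the gap-length inequality, in particular making sure one uses $S(n_t) - 1$ gaps (one fewer than the number of elements) and tracks the additive $-1$ through to the conclusion that the count of small gaps still tends to infinity. Once that is set up, the pigeonhole on a set of $n_0 - 1$ possible values is immediate.
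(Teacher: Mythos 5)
Your argument is correct. The paper itself does not prove this lemma---it is quoted from Zhan (Lemma 5)---so there is no in-paper proof to compare against; your gap-counting plus pigeonhole argument is the natural self-contained proof and all the steps check out: the sum of the first $S(n_t)-1$ gaps is $s_{S(n_t)}-s_1\leq n_t$, so at most $n_t/n_0$ gaps are $\geq n_0$, the count of gaps in $\{1,\dots,n_0-1\}$ is at least $(S(n_t)-n_t/n_0)-1\to\infty$ along the subsequence, pigeonhole gives a value $d\leq n_0-1$ occurring infinitely often, and since the $s_m$ are strictly increasing you can always place such an occurrence beyond any threshold $i$.
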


\begin{lemma}
\label{lemma: monostringbounded}
Let $c: \mathbb{N} \rightarrow \{R, G, B\}$ be  rainbow-free for $x-y=z^k$  such \[ \displaystyle \limsup_{n\rightarrow \infty}\left ({\min \{ \mathcal{R}(n),\mathcal{B}(n),\mathcal{G}(n) \}- \frac{n}{n_0}} \right )=\infty 
\]  for some integer $n_0 \geq 2$. Then the length of every nondominant monochromatic string is bounded above.
\end{lemma}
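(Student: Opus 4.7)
I will prove the statement by contradiction. Suppose some nondominant monochromatic string can be made arbitrarily long; by the symmetric roles of $B$ and $G$, I may assume green monochromatic strings are unbounded in length.

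First, I would observe that the density hypothesis forces each of $\mathcal{R}$, $\mathcal{B}$, $\mathcal{G}$ to be infinite and unbounded in $\mathbb{N}$: since $\min\{\mathcal{R}(n), \mathcal{B}(n), \mathcal{G}(n)\}$ grows without bound along a subsequence and each counting function is monotone, each of $\mathcal{R}(n), \mathcal{B}(n), \mathcal{G}(n)$ tends to $\infty$. In particular I may set $j^* := \min \mathcal{B}$.

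Next, I would establish that no monochromatic green string has length exceeding $(j^*)^k$. Suppose for contradiction that $c(i) = c(i+1) = \cdots = c(i+\ell-1) = G$ for some $i \geq 1$ and some $\ell > (j^*)^k$. By Lemma~\ref{lemma:nondominantstring} with $j = j^*$, the length-$\ell$ string starting at $i + (j^*)^k$ is monochromatic of some nondominant color. Since $\ell > (j^*)^k$, this shifted string overlaps the original green string, so the two must share their common color, forcing the shifted string to also be green. Taking the union produces a monochromatic green string at $i$ of length $\ell + (j^*)^k$. Iterating, I obtain monochromatic green strings at $i$ of length $\ell + t(j^*)^k$ for every $t \geq 0$, whence $c(p) = G$ for every $p \geq i$. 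But $\mathcal{B}$ is unbounded, so there exists $b \in \mathcal{B}$ with $b \geq i$, contradicting $c(b) = G$. A symmetric argument using $g^* := \min \mathcal{G}$ bounds blue monochromatic strings by $(g^*)^k$.

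The main obstacle --- and key insight --- is the choice of shift. Using Lemma~\ref{Lemma:smalldistance} to shift simultaneously by the $k$th powers of a close pair $(j, j+d)$ of blue elements yields overlap but only forces the two shifted strings to agree in some nondominant color, which could flip between $B$ and $G$ under iteration, giving no clean contradiction. Shifting instead by the single fixed amount $(j^*)^k$ in the positive direction makes the overlap with the current green string automatic once $\ell > (j^*)^k$, pins down the new color as green, and persists at every later step, letting the green region propagate arbitrarily far to the right until it swallows an element of $\mathcal{B}$.
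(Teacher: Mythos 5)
Your proof is correct, but it reaches the contradiction by a different mechanism than the paper does. The paper's argument is a one-step construction: with $i_0 = \min\mathcal{B}$, it takes a green string of length at least $i_0^k$ whose first element $j$ is preceded by a red element (possible because nondominant colors are never adjacent), and observes that $(j+i_0^k-1,\, j-1,\, i_0)$ is itself a rainbow solution to $x-y=z^k$; no propagation and no appeal to Lemma~\ref{lemma:nondominantstring} is needed. You instead apply Lemma~\ref{lemma:nondominantstring} with the fixed shift $(j^*)^k$, use the overlap forced by $\ell > (j^*)^k$ to pin the shifted string's color to green, and iterate so that the green run eventually covers every integer past its start, contradicting the infinitude of $\mathcal{B}$ (which, as you correctly note, follows from the density hypothesis). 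Both arguments turn on the same quantity --- the $k$-th power of the least blue element --- and yield essentially the same bound on green string lengths; the paper's version is shorter and invokes rainbow-freeness directly at the left boundary of the string, while yours trades that boundary observation for a propagation argument whose only extra input is that $\mathcal{B}$ is unbounded. One small caveat: your closing paragraph presents a shift built from Lemma~\ref{Lemma:smalldistance} as the natural but flawed alternative, but that lemma plays no role in this step of the paper (it is used later, e.g.\ in Lemma~\ref{lemma:gcdnondominant}), so the obstacle you describe is self-imposed rather than intrinsic to the statement.
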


\begin{proof}
Since the upper density of every color class is finite, there cannot be any monochromatic strings of infinite length.

Suppose that $R$ is the dominant color in $c$. 
Let $i_0= \min \{ i \in \mathbb{N}  \, |  \, c(i) = B\}$. Assume for the sake of contradiction that there exist $G$-monochromatic strings of arbitrary length. Suppose there exists a $G$-monochromatic string at position $j \geq i_0$ of length $\ell$ such that $\ell \geq i_0^k$. Without loss of generality, let $j$ be the first green element in the string so that $c(j-1) = R$. Then $c(j + i_0^k-1)=G$. It follows that $(j+i_0^k -1, j-1, i_0)$ is a rainbow solution to the equation $x-y=z^k$, which is a contradiction. 
Hence, the lengths of nondominant monochromatic strings in $c$ are bounded above.

\end{proof}

An infinite arithmetic progression with initial term $i$ and common difference $d$ is monochromatic if $c(i)= c(i+d) = c(i+2d) =\ldots$. An element $j \in \mathbb{N}$ has the \emph{$A$-property}  if $c(j)$ is nondominant and there exists a monochromatic infinite arithmetic progression of the other nondominant color with common difference $j^k$.

\begin{lemma}
\label{lemma: relprimeAproperty} Let $c: \mathbb{N} \rightarrow \{R, G, B\}$ be  rainbow-free for $x-y=z^k$ with dominant color $R$.   If $c(j_1) = c(j_2) \neq R$ and $\gcd(j_1, j_2) =1$, then at most one of $j_1$ and $j_2$ has the $A$-property. 
\end{lemma}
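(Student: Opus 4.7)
Suppose for contradiction that both $j_1$ and $j_2$ have the $A$-property. Writing $B$ for the shared nondominant color $c(j_1) = c(j_2)$ and $G$ for the other nondominant color, the $A$-property supplies two monochromatic infinite $G$-colored arithmetic progressions $G_1 = \{a_1 + m j_1^k : m \geq 0\}$ and $G_2 = \{a_2 + n j_2^k : n \geq 0\}$. The plan is a two-phase attack: in Phase 1 I show that $c(N) = G$ for every sufficiently large $N$, and in Phase 2 I use the blue element $j_1$ together with the dominance of $R$ to propagate greenness all the way down to $N = 1$, contradicting $c(1) = R$.

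For Phase 1, I would prove by induction on $t \geq 0$ that the shifted arithmetic progression $\{a_2 + n j_2^k - t : n \geq n_t\}$ lies inside $\mathcal{G}$ for some starting index $n_t$. The case $t = 0$ is the given $G_2$. For $t \to t+1$, since $\gcd(j_1^k, j_2^k) = 1$ the equation $a_1 + m j_1^k + j_2^k + (t+1) = a_2 + n j_2^k$ has non-negative integer solutions $(m,n)$ with $n$ arbitrarily large; set $N = a_1 + m j_1^k + j_2^k$, so $N+1$ lies in the AP at level $t$ and hence $c(N+1) = G$ by the inductive hypothesis. Lemma~\ref{lemma:nondominantstring} applied to $N - j_2^k \in G_1$ with $c(j_2) = B$ gives $c(N) \in \{G,B\}$, while rainbow-avoidance for the triple $(N+1,N,1)$ combined with $c(N+1) = G$ and $c(1) = R$ forces $c(N) \in \{G,R\}$; intersecting, $c(N) = G$. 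A nested induction on $l \geq 0$ then propagates greenness along the entire AP $\{N + l j_2^k : l \geq 0\} = \{a_2 + (n+l) j_2^k - (t+1) : l \geq 0\}$ by applying the same pair of constraints at $N + l j_2^k$. Since every large enough $N$ may be written as $a_2 + n j_2^k - t$ with $t \in [0, j_2^k)$ and $n \geq n_t$, this yields a threshold $C_0$ with $c(N) = G$ for all $N \geq C_0$.

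For Phase 2, assume $c(N) = G$ for every $N \geq C$ and consider the interval $I = [\max(1, C - j_1^k),\, C - 1]$, which has length at least $j_1^k \geq 4$ (since $j_1 \geq 2$ and $k \geq 2$) whenever $C > j_1^k + 1$. If some $y \in I$ were red, then $(y + j_1^k, y, j_1)$ would have colors $(G,R,B)$, a rainbow contradicting rainbow-freeness; so $I \subseteq \mathcal{G} \cup \mathcal{B}$. By dominance of $R$, any two adjacent positions in $I$ of differing nondominant colors would form a bichromatic string containing no $R$, so $I$ must be monochromatic in $\{G,B\}$; and if $I$ were entirely blue, the pair $(C-1, C) = (B, G)$ would itself be a bichromatic string without $R$. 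Hence $I \subseteq \mathcal{G}$, which lowers the green threshold from $C$ to $C - j_1^k$. Iterating reduces the threshold to $1$, forcing $c(1) = G$ and contradicting $c(1) = R$.

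The main obstacle is Phase 1: getting a genuine cofinite green threshold requires the double induction on $(t, l)$ together with a careful choice of the Bezout witness $m$ so that the starting indices $n_t$ remain uniformly bounded as $t$ ranges over a residue system modulo $j_2^k$. Phase 2 is then a clean dominance-plus-shift argument hinging on the single rainbow pattern $(G, R, B)$ produced by the blue element $j_1$.
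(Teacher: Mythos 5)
Your proof is correct, and it takes a genuinely different route from the paper's. The paper also starts from the two monochromatic progressions of the other nondominant color, but it then applies Bezout twice---first to locate a common term of the two progressions, then to produce two translates of that term at distance exactly $1$---and feeds this into Lemma~\ref{lemma:nondominantstring} to lengthen nondominant monochromatic strings by one repeatedly, reaching a contradiction with the upper bound on string lengths from Lemma~\ref{lemma: monostringbounded}. You instead interlock the two progressions directly: the double induction of your Phase 1 is sound (the worry you raise about uniform bounds on the starting indices $n_t$ is moot, since only the finitely many residues $t=0,\dots,j_2^k-1$ are needed and one simply takes the maximum of finitely many thresholds), it shows that all sufficiently large integers carry the other nondominant color, and Phase 2 pushes that color down to $1$, contradicting $c(1)=R$, which is legitimate to assume via Lemma~\ref{lemma:reddominant} together with uniqueness of the dominant color. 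The trade-off is worth noting: the paper's route leans on Lemma~\ref{lemma: monostringbounded}, whose density hypothesis does not appear in the statement of this lemma and is only supplied later when the lemma is invoked inside Theorem~\ref{thm:relativelyprimegcd}; your argument uses no density assumption at all, so it proves the lemma exactly as stated and in that sense tightens the paper's logical structure. Two small economies are available: in Phase 2 the interval-and-dominance detour can be replaced by a one-step descent at $C-1$, where red is excluded by the triple $(C-1+j_1^k,\,C-1,\,j_1)$ and blue by $(C,\,C-1,\,1)$; and in Phase 1 you could seed each level at $N=a_1+mj_1^k$ itself, which is already green as a member of $G_1$, avoiding the extra $+j_2^k$ shift and the first use of Lemma~\ref{lemma:nondominantstring} at $l=0$.
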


\begin{proof}
Suppose $j_1$ and $j_2$ satisfy $\gcd(j_1, j_2) = 1$ and $c(j_1) =c(j_2) = G$. Furthermore, assume by way of contradiction that both $j_1$ and $j_2$ have the $A$-property. Then there exist two $B$-monochromatic infinite arithmetic progressions, one with initial term $i_1$ and common difference $j_1^k$ and the other with initial term $i_2$ and common difference $j_2^k$.

Suppose the $B$-monochromatic string at position $i_1$ has length $\ell_0$. By Lemma \ref{lemma:nondominantstring}, there exist $B$-monochromatic strings of length $\ell_0$ at all integers of the form $i_1 + mj_1^k$ and $i_2 +mj_1^k$ for all non-negative integers  $m$. Since $\gcd(j_1^k, j_2^k) = 1$, there exist positive integers $u_1$, $u_2$ such that $u_1j_1^k - u_2j_2^k = i_2-i_1$, and so  $i_1 + u_1j_1^k = i_2 + u_2j_2^k$. This gives a common value in both arithmetic progressions; call this common value $i_3 = i_1 + u_1j_1^k = i_2 + u_2j_2^k$.

Consider the $B$-monochromatic string at position $i_3$ of length $\ell_1$. Since $i_3$ is part of both $B$-monochromatic arithmetic progressions, by Lemma \ref{lemma:nondominantstring} there exist $B$-monochromatic strings of length $\ell_1$ at positions $i_3 + mj_1^k$ and $i_3 + mj_2^k$ for all non-negative $m$. Since $\gcd(j_1, j_2) =1$, there exist integers $v_1$ and $v_2$ such that $v_1 j_1^k - v_2j_2^k = 1$. Thus $(i_3 + v_1j_1^k) -(i_3 + v_2j_2^k) =1$, so $i_3 + v_1j_1^k= 1+ i_3 + v_2j_2^k$, and $c(i_3 + v_1j_1^k) = c(i_3 + v_2j_2^k) = B$.

Applying Lemma $\ref{lemma:nondominantstring}$ again, there exists a $B$-monochromatic string of length $\ell_1$ at position $i_3 + v_2j_2^k$.  Since $i_3 + v_1j_1^k= 1+ i_3 + v_2j_2^k$, there exists a $B$-monochromatic string of length $\ell_1+ 1$ at $i_3 + v_1j_1^k$. This process can be repeated to obtain arbitrarily long $B$-monochromatic strings, contradicting Lemma \ref{lemma: monostringbounded}. Therefore $j_1$ and $j_2$ cannot both have the $A$-property. 

\end{proof}

Again following the approach of  \cite{Zhan}, we introduce some notation here that will be used in the following lemma. Define the magnitude function $M(u,v,w, i, D) = i+ ud_1^k + vd_2^k + wd_3^k$, where $u, v, w, i \in \mathbb{Z}$ and $D= (d_1, d_2, d_3)$. Let $m$ be the length of the lattice path $P = \{(a_{p, 1},b_{p,1}), (a_{p_2}, b_{p_2}), \ldots, (a_{p, m}, b_{p,m})$. 

The following theorem gives that when there are two pairs of relatively prime elements in each nondominant color, a rainbow solution must exist. 
\begin{thm}\label{thm:relativelyprimegcd}
Let $c: \mathbb{N} \rightarrow \{R, G, B\}$ be a coloring with a dominant color $R$, satisfying the density condition of Lemma \ref{lemma: monostringbounded}.  Assume that  both $\mathcal{B}$ and $\mathcal{G}$  contain a pair of relatively prime integers. Then $c$ contains a rainbow solution to $x-y=z^k$. 
\end{thm}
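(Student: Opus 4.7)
I would argue by contradiction, supposing $c$ is rainbow-free. First, apply Lemma~\ref{lemma: relprimeAproperty} separately to the two coprime blues $b_1,b_2$ and to the two coprime greens $g_1,g_2$: in each pair at most one element has the $A$-property, so we may fix a blue $b\in\{b_1,b_2\}$ and a green $g\in\{g_1,g_2\}$ that both lack it. By definition, there is no monochromatic infinite green arithmetic progression of common difference $b^k$ and no monochromatic infinite blue one of common difference $g^k$.

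Next, I would fix a green monochromatic string $I$ at some position $i$, of length $\ell$ large relative to $b_1,b_2,g_1,g_2$; such a string exists because the density hypothesis together with Lemma~\ref{lemma: monostringbounded} forces infinitely many green strings of bounded length. Using Lemma~\ref{lemma:nondominantstring} and its mirror (obtained by swapping $B$ and $G$, which is valid since $R$ is dominant), one may shift $I$ by $\pm b_j^k$ when the current shifted string is green and by $\pm g_j^k$ when it is blue, and remain within a $\{B,G\}$-monochromatic string of length $\ell$ throughout. The failures of the $A$-property guarantee that neither a purely-green walk along $b^k$-steps nor a purely-blue walk along $g^k$-steps can persist forever, which forces the green--blue transitions that keep the alternating walk well-defined.

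Reinterpreting the reachable offsets through the magnitude function $M(u,v,w,i,D)$, the coprimality $\gcd(b_1,b_2)=\gcd(g_1,g_2)=1$ implies via a Frobenius-type argument that the set of offsets reachable by such walks contains every sufficiently large integer, so $c$ would miss $R$ on a tail of $\mathbb{N}$, contradicting the density hypothesis which forces $\mathcal{R}(n)\ge n/n_0$ infinitely often. The main obstacle I expect is the alternation constraint built into Lemma~\ref{lemma:nondominantstring}: only non-dominant monochromatic strings can be shifted, and the shift generator must come from the \emph{other} non-dominant color. As a result, the reachable lattice of offsets is not the full monoid generated by $\{b_1^k,b_2^k,g_1^k,g_2^k\}$ but a subset constrained by the color history of the path; tracking this carefully via the lattice-path formalism, and invoking the $A$-property failures at the right moments to keep the walk alive, will be the technical heart of the argument.
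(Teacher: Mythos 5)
Your setup matches the paper's: argue by contradiction, use Lemma~\ref{lemma: relprimeAproperty} to select one blue and one green element without the $A$-property, and propagate nondominant strings by Lemma~\ref{lemma:nondominantstring}, with the $A$-property failures preventing the walk from staying one color forever. But the argument breaks at exactly the point you defer as ``the technical heart.'' Your intended contradiction is that the reachable offsets contain every sufficiently large integer, so that $\mathbb{N}$ would eventually miss $R$; a plain Frobenius argument on $\{b_1^k,b_2^k,g_1^k,g_2^k\}$ does not give this, because each step of the walk is only licensed when the current string has the opposite nondominant color to the shifting element, so the reachable set is dictated by the (unknown) coloring rather than by the numerical semigroup. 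No covering of a tail can be extracted this way --- and none is needed. The paper's proof avoids any covering claim: using $\gcd(i_1,i,j)=1$ it fixes a single identity $u_0 i_1^k + v_0 i^k + w_0 j^k = -1$ with $u_0>0$, $v_0,w_0<0$, then builds color-driven lattice paths ($P_0$, $P_0'$, their union $P_1$, its translate $P_1'$, and $P_2$) whose geometry forces $P_1'$ and $P_2$ to intersect; the two magnitudes at the intersection differ by exactly $1$, so two nondominant strings of length $\ell'$ sit at adjacent positions, producing a nondominant string of length $\ell'+1$. Iterating yields arbitrarily long nondominant strings, contradicting Lemma~\ref{lemma: monostringbounded}. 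So the missing idea is precisely this ``gain one unit of length and iterate'' mechanism, driven by a B\'ezout-type identity and a path-intersection argument, in place of a density/covering contradiction.

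A secondary flaw: you begin by fixing a green monochromatic string of length $\ell$ ``large relative to'' the chosen elements. Under the rainbow-free assumption and the density hypothesis, Lemma~\ref{lemma: monostringbounded} says nondominant strings are bounded, so you cannot prescribe a large $\ell$; having ``infinitely many green strings of bounded length'' does not supply one of large length. The paper needs no such assumption: it starts from an arbitrary nondominant position $i_2$ (chosen large only to keep all magnitudes positive) with whatever string length $\ell$ it happens to carry, and the contradiction comes from growing that length indefinitely, not from assuming it large at the outset.
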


\begin{proof}
Assume by contradiction that $c$ is a rainbow-free coloring for $x-y=z^k$. Suppose that $i$, $i_1 \in \mathcal B$ with $\gcd(i, i_1) = 1$ and $j$, $j_1 \in \mathcal G$ with $\gcd (j, j_1)=1$. According to Lemma \ref{lemma: relprimeAproperty}, at most one of $i$ and $i_1$ have the $A$-property, and at most one of $j$ and $j_1$ have the $A$-property. Without loss of generality, assume that $i$ and $j$ do not have the $A$-property. Since $\gcd(i_1, i, j) = 1$, there exist integers $u_0$, $v_0$, $w_0$ such that $u_0  >0$ and $v_0$, $w_0 <0$ such that $u_0 i_1^k +v_0i^k +w_0j^k = -1$. Let $D= (i_1, i, j)$.

Choose $i_2$ such that $c(i_2) \neq R$ and $i_2 > \max \{ -v_0i^k, -w_0j^k\} \geq i$. Let $\ell$ be the length of the nondominant monochromatic string at position $i_2$. Construct a lattice path as follows: let $(\alpha_0, \beta_0) = (0,0)$ and recursively define 
$(\alpha_t, \beta_t)$ by

\[(\alpha_{t+1}, \beta_{t+1}) =\begin{cases} (\alpha_t+1, \beta_t) &\text{ if } c(M(\alpha_t, 0, \beta_t, i_2, D)=G\\ (\alpha_t, \beta_t+1) &\text{ if } c(M(\alpha_t, 0, \beta_t, i_2, D)=B.
\end{cases} \]
By Lemma \ref{lemma:nondominantstring},  at each $M(\alpha_t, 0, \beta_t, i_2, D)$ there exists a nondominant monochromatic string of length $\ell$. Suppose that $\alpha_t < u_0$ for all $t$. Then for some $t$, the string of $\beta_t$ must increase infinitely many times consecutively, so $M(\alpha_t, 0, \beta_t, i_2, D) =i_2 + \alpha_ti_1^k  + \beta_t j^k$ are all blue for $t$ sufficiently large. 
This gives an infinitely long blue monochromatic sequence with  common difference of $M(\alpha_{t+1}, 0, \beta_{t+1}, i_2, D)- M(\alpha_t, 0, \beta_t, i_2, D)= j^k$. Since $c(j) = G$, this contradicts that $j$ does not have has the $A$-property. 
Therefore, there exists some $t_0$ such that $\alpha_{t_0}=u_0$. Let $q_1=\beta_{t_0}$ and so we consider the point \( (\alpha_{t_0}, \beta_{t_0} )=(u_0, v_0)  \) in the $uw$-plane. By Lemma \ref{lemma:nondominantstring}, there exists a monochromatic nondominant string of length $\ell' \geq \ell$  at position $M(u_0, 0, q_1, i_2, D)= i_2 + u_0i_1^k + q_1 j^k$. 

Construct another lattice path  in the $wv$-plane $P_0$ as follows. Let $(v_{P_0,1},w_{P_0,1}) = (0, q_1)$. Recursively define $(v_{P_0,t},w_{P_0,t})$ by

\[(v_{P_0,t+1},w_{P_0,t+1} )  =\begin{cases} (v_{P_0,t}+1,w_{P_0,t}) &\text{ if } c(M(u_0, v_{P_0,t}, w_{P_0,t}, i_2,  D)=G\\ (v_{P_0,t}, w_{P_0,t}-1) &\text{ if } c(M(u_0, v_{P_0,t}, w_{P_0,t}, i_2,  D)=B.
\end{cases} \]

By construction of $P_0$ and Lemma \ref{lemma:nondominantstring}, there exist monochromatic nondominant strings of length $\ell'$ at all $M(u_0, v_{P_0,t}, w_{P_0,t}, i_2,  D)$. Since $i$ does not have the $A$-property, there does not exist an infinite green arithmetic progression with common difference $i^k$. Therefore, there does not exist $t'$ such that for all $t>t'$, $(v_{P_0,t+1},w_{P_0,t+1})=(v_{P_0,t}+1,w_{P_0,t}) $. Thus  there must exist $q_1-w_0$ integers $t$ such that $(v_{P_0,t+1},w_{P_0,t+1})=(v_{P_0,t},w_{P_0,t}-1) $.
Therefore there exists some point  $(v_{P_0,m_0},w_{P_0,m_0}) $ on $P_0$ where $w_{P_0,m_0}=w_0$. We terminate $P_0$ at this point. Note that for all $1 \leq t \leq m_0$, we have $M(u_0,v_{P_0,t},w_{P_0,t}, i_2, D) =i_2 +u_0i_1^k +v_{P_0,t} i^k + w_{P_0,t} j^k   >0$ since $u_0,  > 0$, $v_{P_0,t} \geq 0$ and $i_2+ w_{P_0, t} j^k \geq i_2 + w_0j^k  \geq 0$ by construction of  $P_0$ and choice of $i_2$. Then by Lemma \ref{lemma:nondominantstring} and our construction of $P_0$, there are nondominant strings of length $\ell'$ at each position $M(u_0,v_{P_0,t},w_{P_0,t}, i_2, D)$ for $1 \leq t \leq m_0$. 

We construct another path $P_0'$ as follows. Let 
$(v_{P_0',1},w_{P_0',1}) = (0, q_1)$. 
Recursively define 
\[(v_{P_0',t+1},w_{P_0',t+1} )  =\begin{cases} (v_{P_0',t}-1,w_{P_0',t}) &\text{ if } c(M(u_0, v_{P_0',t}, w_{P_0',t}, i_2,  D)=G\\ (v_{P_0',t}, w_{P_0',t}+1) &\text{ if } c(M(u_0, v_{P_0',t}, w_{P_0',t}, i_2,  D)=B. 
\end{cases} \]

We again use Lemma \ref{lemma:nondominantstring} to conclude using the above method that there exists a nondominant string of length at least $\ell'$ at all positions of the form  $M(u_0,v_{P_0',t},w_{P_0',t}, i_2, D) $ whenever 
$M(u_0,v_{P_0',t},w_{P_0',t}, i_2, D) >0$ which will be satisfied as long as $v_{P_0', t} > v_0$.  Since $j$ does not have the $A$-property, at some point $m_0'$, we have $v_{P_0',m_0'}=v_0$. Terminate $P_0' $ at the point $(v_{P'_0,m_0'},w_{P'_0,m_0'}) $. 

Let $P_1$ be the union of $P_0$ and $P_0'$. The path $P_1$ is connected since $(0, q_1)$ is on both paths  and has length $m_0+m_0' -1$. Define $(v_{P_1, 1}, w_{P_1, 1}) = (v_{P_0', m_0'}, w_{P_0', m_0'})$ so that $(v_{P_1, m_0+m_0'-1}, w_{P_1, m_0+m_0'-1}) = (v_{P_0, m_0}, w_{P_0, m_0})$. Define $P_1'$ to be the path in the $vw$-plane  satisfying 
$(v_{P_1', t}, w_{P_1', t}) =(v_{P_1, t}-v_0, w_{P_1, t} -w_0)  $. Note that $(v_{P_1', 1}, w_{P_1', 1})= (0, w_{P_0', m_0'}-w_0)$ and $(v_{P_1', t}, w_{P_1', t})= (v_{P_0, m_0}-v_0, 0)$.

Finally,  construct a path $P_2$ defined as follows: 
Let $(v_{P_2, 1}, w_{P_2, 1}) = (0, 0).$ Recursively define $(v_{P_0,t},w_{P_0,t})$  by

\[(v_{P_2,t+1},w_{P_2,t+1}) =  \begin{cases} (v_{P_2,t}+1,w_{P_2,t}) &\text{ if } c(M(0, v_{P_2,t}, w_{P_2,t}, i_2,  D)=G\\ (v_{P_2,t}, w_{P_2,t}+1) &\text{ if } c(M(0, v_{P_2,t}, w_{P_2,t}, i_2,  D)=B.
\end{cases}  \]
Again by Lemma, \ref{lemma:nondominantstring}, there exists a  monochromatic nondominant string of length at least $\ell'$ at all positions of the form $M(0, v_{P_2,t}, w_{P_2,t}, i_2,  D)$. 

As Figure \ref{tikzpaths} illustrates, by construction of $P_1'$ and $P_2$, there must be a point of intersection of the two paths, say $(v_0', w_0')$ with $v_0',  w_0' > 0$. Consider the corresponding point $(v_0'+v_0, w_0'+w_0) $ on $P_1$ which corresponds to  magnitude $M(u_0, v_0'+v_0, w_0'+w_0, i_2, D) =i_2 + u_0 i_1^k + v_0'i^k + v_0i^k + w_0'j^k +w_0j^k $. On $P_2$ the point $(v_0', w_0')$  corresponds to magnitude $M(0, v_0', w_0', i_0, D)= i_2 + v_0'i^k +w_0'j^k$. Subtracting the two magnitudes gives $u_0i_1^k + v_0i^k + w_0j^k=-1$ by choice of $u_0$, $v_0$, $w_0$.  Therefore  $M(u_0, v_0'+v_0, w_0'+w_0, i_2, D) $ and $M(0, v_0', w_0', i_0, D)$ are adjacent, positive and each have a nondominant string of length at least $\ell'$ in the nondominant color. Thus a  string of length at least $\ell' + 1$ exists at $M(u_0, v_0'+v_0, w_0'+w_0, i_2, D) $, which allows us to generate arbitrarily-long nondominant monochromatic strings, contradicting \ref{lemma: monostringbounded}. Thus, we conclude that $c$ contains a rainbow solution to $x-y=z^k. $

\begin{figure}[ht]
\centering
\includegraphics{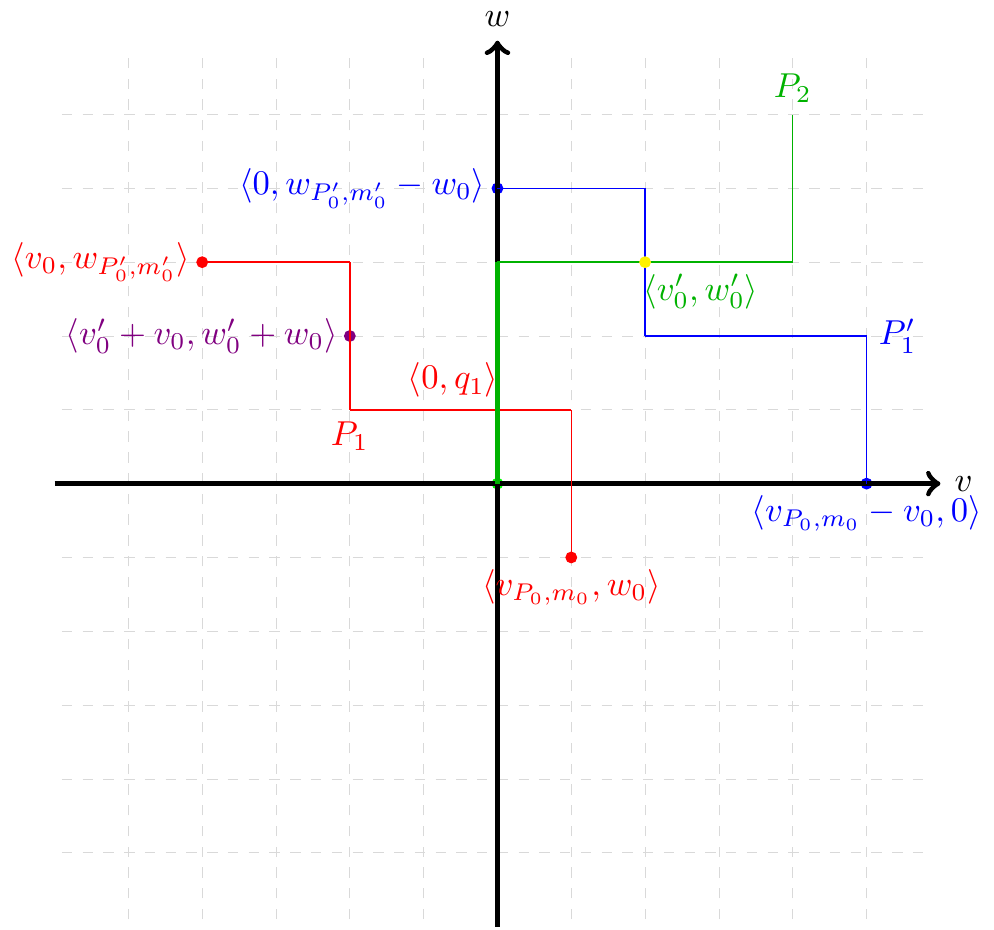}
\caption{Path $P_1, P'_1, and P_2$}
\end{figure}

\end{proof}

\section{A density condition for rainbow-free colorings  over $\mathbb{N}$}

Using Theorem \ref{thm:relativelyprimegcd}, we show that  $3$-colorings of $\mathbb{N}$ satisfying a density condition contain rainbow solutions to $x-y=z^k$. When $k=2$, the upper density is $\frac{1}{4}$ as in \cite{Zhan}.

We use the following generalization of the Frobenius coin problem in the proof of Lemma \ref{lemma:gcdnondominant}.
\begin{thm}
\label{FrobeniusCoinProblem}
Suppose two integers $i$ and $j$ satisfy gcd$(i,j)=k$. Then there exists an integer $n_0$ such that all numbers greater than $n_0$ divisible by $k$ can be written in the form $ui+vj$ for non-negative integers $u$ and $v$.
\end{thm}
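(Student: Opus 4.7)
The plan is to reduce the stated theorem to the classical (coprime) Frobenius coin problem by pulling out the common factor. Write $i = k i'$ and $j = k j'$ where, by definition of $k = \gcd(i,j)$, we have $\gcd(i', j') = 1$. Any integer $n$ divisible by $k$ has the form $n = km$ for some integer $m$, and the equation $n = ui + vj$ with $u, v \geq 0$ is equivalent to $km = k(ui' + vj')$, i.e., $m = ui' + vj'$. So the theorem reduces to showing that every sufficiently large integer $m$ can be represented as a non-negative integer combination of the coprime integers $i'$ and $j'$.

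Next, I would invoke the classical Frobenius result in the coprime case: if $\gcd(i', j') = 1$, then every integer $m \geq (i'-1)(j'-1)$ admits a representation $m = ui' + vj'$ with $u, v \in \mathbb{Z}_{\geq 0}$ (the Frobenius number is $i'j' - i' - j'$, and this bound is tight). One can prove this either by a short pigeonhole argument on the residues $\{0, i', 2i', \ldots, (j'-1)i'\}$ modulo $j'$ — each residue class mod $j'$ contains exactly one of these, so for $m$ large enough one can subtract off the appropriate multiple of $i'$ and be left with a non-negative multiple of $j'$ — or by directly exhibiting the Frobenius number via the standard B\'ezout/ordering argument.

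Finally, setting $m_0 = (i'-1)(j'-1)$ and $n_0 = k m_0$, I conclude: if $n > n_0$ is divisible by $k$, then $n = km$ with $m > m_0$, hence $m = ui' + vj'$ with $u, v \geq 0$, and therefore $n = ui + vj$ as required. There is no real obstacle here; the only thing to be careful about is the reduction step — one must observe that $\gcd(i', j') = 1$ follows immediately from $k = \gcd(i,j)$, and that the divisibility hypothesis on $n$ is precisely what allows the translation of the coprime conclusion back to the original pair $(i, j)$.
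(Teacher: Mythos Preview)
Your argument is correct and follows the standard reduction: factor out $k$, apply the classical two-coin Frobenius theorem to the coprime pair $(i',j')$, and translate back. The paper, however, does not supply a proof of this statement at all; it is simply quoted as a known generalization of the Frobenius coin problem and used as a black box in the proof of Lemma~\ref{lemma:gcdnondominant}. So there is nothing to compare against---your write-up fills in exactly the standard justification the paper omits.
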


In the following lemma, we use the stronger density condition to generalize \cite{Zhan}, Lemma 7. 

\begin{lemma}
\label{lemma:gcdnondominant}
Let $c: \mathbb{N} \rightarrow \{R, G, B\}$ be rainbow-free for $x-y=z^k$ such that 
\[ \displaystyle \limsup_{n\rightarrow \infty}\left ({\min \{ \mathcal{R}(n),\mathcal{B}(n),\mathcal{G}(n) \}- \frac{4^s-1}{3 \cdot 4^s}} \right )=\infty
\] 
where $s= \left \lfloor \frac{k}{2} \right \rfloor$ and  $R$ is the dominant color. Then both $\mathcal{B}$ and $\mathcal{G}$ must contain a pair of relatively prime integers.
\end{lemma}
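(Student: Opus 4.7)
The plan is to argue by contradiction: assume $c$ is rainbow-free for $x-y=z^k$ and (WLOG by swapping the roles of $B$ and $G$) that $\mathcal{B}$ contains no pair of coprime integers, i.e., every two elements of $\mathcal{B}$ share a common factor $\ge 2$. I then aim to show that the upper density of $\mathcal{B}$ is at most $\frac{4^s-1}{3\cdot 4^s}$, contradicting the density hypothesis on the minimum of the three color classes.

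First, I would extract arithmetic structure from the rainbow-free condition. For any pair $b_1,b_2\in \mathcal{B}$ with $\gcd(b_1,b_2)=d$, we have $\gcd(b_1^k,b_2^k)=d^k$, and Theorem~\ref{FrobeniusCoinProblem} implies that every sufficiently large multiple of $d^k$ can be expressed as $ub_1^k+vb_2^k$ with $u,v\ge 0$. Lemma~\ref{lemma:nondominantstring} then propagates any $G$-monochromatic string at position $i$ to nondominant monochromatic strings at all positions $i+md^k$ for large $m$ (and symmetrically to the left). Because no two blue elements are coprime, $d\ge 2$, so every such forced arithmetic progression has common difference at most $2^k\le (\text{prime of the pair})^k$, and Lemma~\ref{lemma: monostringbounded} caps the total length of the resulting nondominant strings.

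Second, I would convert these propagation constraints into a density bound on $\mathcal{B}$. Positions forced to be non-blue by the above mechanism form a union of arithmetic progressions with common differences bounded by powers of $2$; iterating the refinement by restricting to $p\mathbb{N}$ (for $p$ a prime dividing many blue elements) and rescaling produces a geometric series in $p^{-k}\le 4^{-s}$ at each level. Summing $\sum_{t\ge 1} 4^{-ts}=\frac{1}{4^s-1}$ and rearranging yields the upper density bound $\frac{1}{3}-\frac{1}{3\cdot 4^s}=\frac{4^s-1}{3\cdot 4^s}$ on $\mathcal{B}$, which is the desired contradiction.

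The main obstacle is the pairwise-but-not-globally coprime case, exemplified by $\mathcal{B}\supseteq\{6,10,15\}$, where no single prime captures $\mathcal{B}$ and the clean refinement to $p\mathbb{N}$ fails. I expect the fix to be a pigeonhole argument on the prime factorization of the smallest blue element $b_0$: every other blue element shares a prime factor with $b_0$, so some prime $p\mid b_0$ captures a positive-density subfamily of $\mathcal{B}$, and the Frobenius propagation applied to that subfamily still produces enough forced positions to bound the total density of $\mathcal{B}$ by $\frac{4^s-1}{3\cdot 4^s}$. Tracking the density losses at each level of refinement carefully—so that each scaling step truly contributes a factor of $4^{-s}$ rather than something weaker—is the delicate bookkeeping step where the exact constant should emerge.
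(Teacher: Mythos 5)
Your proposal is a sketch with genuine gaps, and the mechanism you describe would not deliver the stated constant. First, you never produce the starting configuration that drives the argument. The paper's opening move is to note that if neither $\mathcal B$ nor $\mathcal G$ contained two \emph{consecutive} integers, then dominance of $R$ would force $\mathcal R(n)\ge n/2$, contradicting the density hypothesis; hence one nondominant class, say $\mathcal B$, contains a consecutive (hence coprime) pair, which simultaneously supplies the $B$-monochromatic string of length $2$ that is later propagated. Your symmetric setup (assume $\mathcal B$ has no coprime pair and bound its density) discards this and leaves you with no nondominant string of length $\ge 2$ to propagate. Second, the propagation step is miscalibrated: Lemma~\ref{lemma:nondominantstring} forces translated strings to be \emph{nondominant} (blue or green), not non-blue, so the positions you describe are not ``forced to be non-blue'' and cannot by themselves give an upper bound on the density of $\mathcal B$. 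Moreover, the constant $(4^s-1)/(3\cdot 4^s)$ does not arise from summing a geometric series $\sum_{t\ge 1}4^{-ts}$; in the paper it emerges from a block count with period $d^k$ for $d\in\{2,3\}$: blue strings of length $2$ occur at the start of each block, every green element must be followed by $d-1$ red elements (nondominant colors cannot be adjacent), and adding the three forced counts in a block of size $2^k$ (resp.\ $3^k$) exceeds the block length, contradicting the density hypothesis. Your ``rearranging yields the bound'' step has no argument behind it.

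Third, the pairwise-but-not-globally-coprime obstruction you flag (e.g.\ $\{6,10,15\}$) is dissolved in the paper not by a pigeonhole over prime factors of a smallest element, but by Lemma~\ref{Lemma:smalldistance}: since $(4^s-1)/(3\cdot 4^s)\ge 1/4$, the class without a coprime pair contains two elements at distance $d\le 3$, and non-coprimality forces $d\in\{2,3\}$ with $\gcd$ exactly $2$ or $3$; that single pair is all the Frobenius/propagation step (Theorem~\ref{FrobeniusCoinProblem} plus Lemma~\ref{lemma:nondominantstring}, with boundedness from Lemma~\ref{lemma: monostringbounded}) requires. Your proposed fix (``some prime $p\mid b_0$ captures a positive-density subfamily'') is not developed and would not obviously recover the exact constant. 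In short, you have identified the right ingredients---Frobenius representation, string propagation, a density contradiction---but the chain tying them to the specific bound $(4^s-1)/(3\cdot 4^s)$ is missing, and the step bounding the density of the ``bad'' class is based on a misreading of what Lemma~\ref{lemma:nondominantstring} forces.
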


\begin{proof}
Suppose $\mathcal B$ and $\mathcal G$ contain no pairs of consecutive integers. Since $R$ is a dominant color, for all $i$, $c(i)=R$ or $c(i+1)= R$. Then for all $n$,  $|\mathcal R(n)| \geq n/2$ and so $\liminf_{n \rightarrow \infty} (\mathcal{R}(n)-(4^s-1)/(3 \cdot 4^s)) \geq \liminf_{n \rightarrow \infty} (n/2 - (4^s-1)/(3 \cdot 4^s) ) \geq 0$. Therefore, the $\limsup_{n \rightarrow \infty}({\min\{\mathcal{B}(n),\mathcal{G}(n)\}-(4^s-1)/(3 \cdot 4^s)})\leq 0$, a contradiction. Therefore there exists an $i$ such that $i$ and $i+1$ must be in $\mathcal{B}$ or $\mathcal{G}$. Without loss of generality, suppose $i$ and $i+1$ are $B$. Then $B$ contains a pair of relatively prime integers. 

Assume $\mathcal{G}$ does not have a pair of relatively prime integers. Let $d$ be the minimum difference between any two elements in $\mathcal{G}$. Since $(4^s-1)/(3 \cdot 4^s) \geq 1/4$,  $\mathcal G$ satisfies the conditions of Lemma \ref{Lemma:smalldistance} with $n_0=4$, so we have that $d \leq 3$. Therefore, there exists a $j$ such that $j$, $j+d \in \mathcal{G}$.

First consider $d=2$.  Since $j$ and $j +2 $ are not relatively prime, $\gcd(j, j+2) =2$. There exists a $B$-monochromatic string at position $i$ of length $\ell \geq 2$. By Theorem \ref{FrobeniusCoinProblem}, there exists an integer $n_0$ such that all integers greater than $n_0$ that are divisible by $\gcd(j^k,(j+2)^k)=2^k$ can be expressed in the form $j^ku + (j+2)^kv$ for some non-negative integers $u$ and $v$. Hence,  all integers greater than $i+n_0$ that are congruent to $i \mod{2^k}$ can be expressed in the form $i+j^ku+(j+2)^kv$, and so there exist $B$-monochromatic strings at positions $i+j^k$ and $i+(j+2)^k$ of length at least 2 by Lemma \ref{lemma:nondominantstring}. By induction, for any non-negative $u$ and $v$, there exist $B$-monochromatic strings at $i+j^ku+(j+2)^kv$ of length at least 2. Thus, at some $n_1$, there exists a blue string at of length  at least 2 at all integers of the form $n_1+2^km$.

Consider a string of length $2^k$ at position $n_1+2^km$. By our assumption $c(n_1 + 2^km)= c(n_1+2^km+1)=B$. By Lemma \ref{lemma:reddominant}, $c(n_1+2^km+2) \neq G$ and $c(n_1 + 2^k(m)+2^k-1) \neq G$. 
Since $d=2$, every green element  is followed by a red element, so $|\mathcal G \cap [n_1+2^km, n_1+2^k(m+1)-1] \}| \leq  |\mathcal R \cap [n_1+2^km, n_1+2^k(m+1)-1] \}|-1 $. 
When $k$ is even, one has that $\frac{4^s-1}{3 \cdot 4^s}= \frac{2^k-1}{3 \cdot 2^k}$. Thus by the density condition for $m$ sufficiently large, we have that  $|\mathcal G \cap [n_1+2^km, n_1+2^k(m+1)-1] \}| > \frac{2^k-1}{3}$, $|\mathcal R \cap [n_1+2^km, n_1+2^k(m+1)-1] \}|> \frac{2^k-1}{3} + 1$, and $|\mathcal B \cap [n_1+2^km, n_1+2^k(m+1)-1] \}|> \frac{2^k-1}{3}$, a contradiction. When $k$ is odd, $\frac{4^s-1}{3 \cdot 4^s}= \frac{2^k-2}{3 \cdot 2^k}$. By a similar argument  we get  a contradiction here. 

Now suppose $d=3$. Since $j$ and $j +3 $ are not relatively prime, $\gcd(j, j+3) =3$. There is a monochromatic blue string of length 2 at position $i$.  By Corollary \ref{FrobeniusCoinProblem}, there exists an integer $n_2$ such that  all integers greater than $n_2$ that are divisible by $\gcd(j^k, (j+3)^k) = 3^k$ can be expressed in the form $j^ku + (j+3)^k v$. As above, there is an integer $n_3$ such that there exists a blue string of length at least 2 at all numbers of the form $n_3 + 3^km$. Since every green element is followed by at least two red elements since $d=3$, the density condition on each color class cannot hold, a contradiction. 

Therefore there exists a relatively prime pair of integers colored green.

\end{proof}

\begin{thm} 
\label{thm:densityNrainbow} 
Let $s = \lfloor k/2 \rfloor$.
Every exact $3$-coloring of the set of natural numbers with with the upper density of each color class greater than 
$(4^s-1)/(3 \cdot 4^s)$

contains a rainbow solution to $x-y=z^k$. 
\end{thm}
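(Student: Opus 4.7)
The plan is to derive the theorem as a short assembly of Lemma~\ref{lemma:gcdnondominant} and Theorem~\ref{thm:relativelyprimegcd}. Suppose for contradiction that $c:\mathbb{N}\to\{R,G,B\}$ is an exact rainbow-free $3$-coloring for $x-y=z^k$ in which each color class has upper density exceeding $(4^s-1)/(3\cdot 4^s)$. By Lemma~\ref{lemma:reddominant}, $c(1)$ is dominant, and after relabeling we may assume $R$ is the dominant color.

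Next I would verify that the density hypothesis of the theorem implies
\[
\limsup_{n\to\infty}\left(\min\{\mathcal R(n),\mathcal B(n),\mathcal G(n)\} - \tfrac{(4^s-1)\,n}{3\cdot 4^s}\right)=\infty,
\]
which is exactly the hypothesis of Lemma~\ref{lemma:gcdnondominant}. Since $(4^s-1)/(3\cdot 4^s)\geq 1/4$ for all $s\geq 1$, this bound also supplies the $n_0=4$ version of the hypothesis of Lemma~\ref{lemma: monostringbounded}, so every nondominant monochromatic string has bounded length. Lemma~\ref{lemma:gcdnondominant} then yields a coprime pair in $\mathcal B$ and a coprime pair in $\mathcal G$.

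With $R$ dominant, the nondominant monochromatic strings bounded in length, and coprime pairs in both $\mathcal B$ and $\mathcal G$, every hypothesis of Theorem~\ref{thm:relativelyprimegcd} is satisfied, and that theorem produces a rainbow solution to $x-y=z^k$ in $c$, contradicting the assumption that $c$ is rainbow-free. The main technical point, and essentially the only one that requires care, is the bookkeeping step in the display above: confirming that the individually stated upper-density conditions on $\mathcal R$, $\mathcal B$, and $\mathcal G$ really imply the joint $\limsup$-of-min condition invoked by the earlier lemmas (taking advantage of the identity $\mathcal R(n)+\mathcal B(n)+\mathcal G(n)=n$ and of the fact that the threshold $(4^s-1)/(3\cdot 4^s)$ is strictly less than $1/3$). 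Once this matching of hypotheses is done, the theorem follows immediately by chaining the previously established results.
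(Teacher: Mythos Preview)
Your overall architecture is exactly the paper's: assume a rainbow-free coloring, invoke Lemma~\ref{lemma:reddominant} to get a dominant color, feed the density hypothesis into Lemma~\ref{lemma:gcdnondominant} to obtain coprime pairs in each nondominant class, and finish with Theorem~\ref{thm:relativelyprimegcd}. On that level there is nothing to add.

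However, the step you single out as ``the main technical point'' is not just bookkeeping --- it is a genuine gap, and the justification you sketch does not close it. The hypothesis ``each color class has upper density $>d$'' (with $d=(4^s-1)/(3\cdot 4^s)<1/3$) does \emph{not} imply
\[
\limsup_{n\to\infty}\Bigl(\min\{\mathcal R(n),\mathcal B(n),\mathcal G(n)\}-dn\Bigr)=\infty,
\]
and neither the identity $\mathcal R(n)+\mathcal B(n)+\mathcal G(n)=n$ nor the inequality $d<1/3$ rescues it. A concrete obstruction: take $N_i=10^i$ and on each block $(N_{i-1},N_i]$ color everything with a single color, cycling $R,B,G,R,B,G,\dots$ through the blocks. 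Then at $n=N_i$ the color that just finished has density about $0.9$, so each of $\mathcal R,\mathcal B,\mathcal G$ has upper density at least $0.9>d$. Yet for every $n$ the color last used three blocks ago satisfies $X(n)\le 0.09\,n$, so $\min\{\mathcal R(n),\mathcal B(n),\mathcal G(n)\}-dn\le(0.09-d)n\to -\infty$. Thus the $\limsup$ in the display is $-\infty$, not $+\infty$, and the hypotheses of Lemma~\ref{lemma: monostringbounded} and Lemma~\ref{lemma:gcdnondominant} are not available.

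The paper's own proof glosses over this same transition, so you have faithfully reproduced its argument; but the honest reading is that the theorem as stated is weaker than what the chain of lemmas actually proves. The lemmas require the joint condition $\limsup\bigl(\min\{\mathcal R(n),\mathcal B(n),\mathcal G(n)\}-dn\bigr)=\infty$, and that is the hypothesis under which your (and the paper's) proof is valid. If you want to keep the per-color upper-density formulation, you would need an additional argument ruling out colorings like the one above --- and no such argument is supplied.
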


\begin{proof}
Assume by contradiction,  there is a rainbow-free 3-coloring $c$ of $\mathbb{N}$ for the equation $x-y=z^k$ satisfying the density condition above. By Lemma \ref{lemma:reddominant}, there exists a dominant color, say red. Since red is dominant, by Lemma \ref{lemma:gcdnondominant} , $\mathcal{B}$ and $\mathcal{G}$ each contain a pair of relatively prime integers. By Theorem \ref{thm:relativelyprimegcd}, $c$ contains a rainbow-solution, a contradiction. 

\end{proof}

\section{Rainbow numbers of $\mathbb{Z}_n$ for $x-y=z^k$}

Using the results in the previous sections on rainbow colorings over $\mathbb{Z}$, we compute rainbow numbers for $x-y=z^k $ over $\mathbb{Z}_p$.

Note rainbow-free 3-coloring of $\mathbb{Z}_n$ yield rainbow-free 3-coloring of $\mathbb{N}$. 

\begin{lemma}
\label{lemma:modtoZ}
If $\overline c: \mathbb{Z}_n \rightarrow  \{R, G, B\}$ is rainbow-free for $x-y=z^k$, then the coloring $c: \mathbb{N} \rightarrow \{R, G, B\}$ given by
$c(i) = \overline c( i \mod n)$ is rainbow-free for $x-y=z^k$. 
\end{lemma}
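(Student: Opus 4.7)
The plan is to prove the contrapositive: suppose $c$ admits a rainbow solution to $x-y=z^k$ in $\mathbb{N}$ and I will produce a rainbow solution for $\overline{c}$ in $\mathbb{Z}_n$, contradicting the hypothesis. So first I take an ordered triple $(a_1,a_2,a_3)\in\mathbb{N}^3$ with $a_1-a_2 = a_3^k$ and $c(a_1), c(a_2), c(a_3)$ pairwise distinct.

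Next I push this solution through the reduction map. Let $b_i = a_i \bmod n \in \mathbb{Z}_n$ for $i=1,2,3$. Since reduction modulo $n$ is a ring homomorphism $\mathbb{Z}\to\mathbb{Z}_n$, applying it to $a_1-a_2=a_3^k$ yields $b_1 - b_2 = b_3^k$ in $\mathbb{Z}_n$, so $(b_1,b_2,b_3)$ is a solution to $x-y=z^k$ over $\mathbb{Z}_n$.

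Finally I check colors. By the very definition of $c$, one has $c(a_i) = \overline{c}(a_i \bmod n) = \overline{c}(b_i)$ for each $i$, so the three colors assigned to $b_1,b_2,b_3$ by $\overline{c}$ agree with the three colors assigned to $a_1,a_2,a_3$ by $c$, and are therefore pairwise distinct. In particular the $b_i$ are automatically distinct elements of $\mathbb{Z}_n$, since a single element of $\mathbb{Z}_n$ receives only one color under $\overline{c}$. Hence $(b_1,b_2,b_3)$ is a rainbow solution for $\overline{c}$, contradicting the assumption that $\overline{c}$ is rainbow-free.

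There is essentially no obstacle in this proof; the lemma amounts to a compatibility check between the reduction map and the equation $x-y=z^k$. The only mildly subtle point is confirming that distinctness of colors forces distinctness of residues, which follows immediately because $\overline{c}$ is a function on $\mathbb{Z}_n$.
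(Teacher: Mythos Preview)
Your proof is correct; the contrapositive via reduction modulo $n$ is exactly the natural argument. The paper itself states this lemma without proof, treating it as immediate, so your write-up simply supplies the routine verification the authors omitted.
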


The following lemma is used to find pairs of relatively prime pairs in $\mathbb{N}$ in nondominant colors. The proof in \cite{Zhan} does not depend on the equation.

\begin{lemma}
\label{lemma:relprimeZn}[\cite{Zhan}, Lemma 15]
Let $\overline c: \mathbb{Z}_n \rightarrow \{R, G, B\}$ be an exact $3$- coloring of $\mathbb{Z}_n$ and let $c: \mathbb{N} \rightarrow \{R, G, B\}$ be defined by $c(i) = \overline c (i \mod n)$. 
If two integers $i_1$ and $i_2$ in $\mathbb{N}$ satisfy $\gcd(|i_1-i_2|,n)=1$, then there exists a pair of relatively prime integers $j_1$ and $j_2$ where $c(i_1)=c(j_1)$,  $c(i_2)=c(j_2)$, and $|i_1-i_2|=|j_1-j_2|$.
\end{lemma}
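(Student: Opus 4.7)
The plan is to construct $j_1$ and $j_2$ as shifts of $i_1$ and $i_2$ by the same multiple of $n$, using the periodicity of $c$ to preserve colors, and then exploit $\gcd(|i_1-i_2|,n)=1$ to arrange coprimality. Set $d = |i_1 - i_2|$ and, without loss of generality, assume $i_2 > i_1$ so that $i_2 - i_1 = d$. For any nonnegative integer $k$, let $j_1 = i_1 + kn$ and $j_2 = i_2 + kn$. Then $j_1 \equiv i_1 \pmod{n}$ and $j_2 \equiv i_2 \pmod{n}$, so by definition of $c$ we get $c(j_1) = c(i_1)$ and $c(j_2) = c(i_2)$, and clearly $|j_1 - j_2| = d$. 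It remains to choose $k$ so that $\gcd(j_1, j_2) = 1$ and both are positive natural numbers.

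The key observation is that any common divisor of $j_1$ and $j_2$ divides their difference $j_2 - j_1 = d$, so $\gcd(j_1, j_2) \mid d$. Thus it suffices to choose $k$ with $\gcd(j_1, d) = \gcd(i_1 + kn, d) = 1$. Since $\gcd(n, d) = 1$ by hypothesis, $n$ is a unit modulo $d$, and therefore as $k$ ranges over any complete residue system modulo $d$ the values $i_1 + kn$ also form a complete residue system modulo $d$. In particular, there exists a residue $k_0 \pmod{d}$ with $i_1 + k_0 n \equiv 1 \pmod{d}$, hence $\gcd(i_1 + k_0 n, d) = 1$.

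Finally, to ensure positivity we note that any $k$ of the form $k = k_0 + m d$ for $m \in \mathbb{N}$ satisfies $k \equiv k_0 \pmod{d}$, so $\gcd(i_1 + kn, d) = 1$ still holds. Taking $m$ sufficiently large makes both $j_1 = i_1 + kn$ and $j_2 = i_2 + kn$ positive, and we obtain the desired pair with $\gcd(j_1, j_2) = 1$, completing the proof.

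I do not anticipate a serious obstacle here; the only thing to be careful about is that one cannot directly demand $\gcd(j_1,j_2)=1$ by a single congruence condition, but the reduction $\gcd(j_1,j_2) \mid d$ together with $\gcd(n,d)=1$ makes this a one-line application of the Chinese remainder theorem style argument, after which positivity follows by translating along the arithmetic progression $k_0 + d\mathbb{N}$.
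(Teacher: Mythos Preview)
Your proof is correct: the reduction $\gcd(j_1,j_2)=\gcd(j_1,d)$ together with the fact that $k\mapsto i_1+kn$ runs through all residues modulo $d$ (since $\gcd(n,d)=1$) is exactly what is needed. The paper does not give its own proof of this lemma but simply cites \cite{Zhan}, noting that the argument there is independent of the equation; your approach is the natural one. (A minor remark: since $i_1,i_2\in\mathbb{N}$ and $k\ge 0$, positivity of $j_1,j_2$ is automatic, so the final paragraph about taking $m$ large is unnecessary.)
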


The following theorem generalizes \cite{Zhan} Theorem 
\begin{thm}
\label{thm: smallestcolorclass}
Let $n$ be odd and let $r_1$ be the smallest prime factor of $n$. Let $\overline c: \mathbb{Z}_n \rightarrow \{R, G, B\}$ be an exact 3-coloring of $\mathbb{Z}_n$ with corresponding color classes $\mathcal{R},\mathcal{B},\mathcal{G}$. 
If $\overline c$ is rainbow-free for $x-y=z^k$, then $\min \{ |\mathcal{R}|,|\mathcal{B}| ,|\mathcal{G}| \} \leq \frac{n}{r_1}$.

\end{thm}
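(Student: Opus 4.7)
My strategy is to derive a contradiction by lifting $\overline c$ to $\mathbb{N}$ and invoking Theorem~\ref{thm:relativelyprimegcd}. Assume $\overline c$ is rainbow-free and that $\min\{|\mathcal R|, |\mathcal B|, |\mathcal G|\} > n/r_1$. By Lemma~\ref{lemma:modtoZ} the periodic extension $c(i) = \overline c(i \bmod n)$ is a rainbow-free $3$-coloring of $\mathbb{N}$, and by Lemma~\ref{lemma:reddominant} it admits a dominant color; after relabeling I may assume $R$ is dominant for both $c$ and $\overline c$. Because $c$ is $n$-periodic, each color class has upper density $|\mathcal X|/n > 1/r_1$ in $\mathbb{N}$, and $r_1 \geq 3$ since $n$ is odd, so the choice $n_0 = r_1$ verifies the density hypothesis of Lemma~\ref{lemma: monostringbounded} that Theorem~\ref{thm:relativelyprimegcd} requires.

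To invoke Theorem~\ref{thm:relativelyprimegcd} I also need both $\mathcal B$ and $\mathcal G$ to contain a relatively prime pair in $\mathbb{N}$. By Lemma~\ref{lemma:relprimeZn} it suffices to exhibit, for each nondominant color, two positive integer representatives $i_1, i_2$ of that color with $\gcd(|i_1 - i_2|, n) = 1$—equivalently, two elements of the corresponding color class in $\mathbb{Z}_n$ whose difference is a unit. The theorem therefore reduces to the extremal statement: \emph{every $\mathcal S \subseteq \mathbb{Z}_n$ with no two elements at unit difference satisfies $|\mathcal S| \leq n/r_1$.}

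For this extremal bound I would use a sliding-window double count. For each $a \in \mathbb{Z}_n$ set $T_a = \{a, a+1, \dots, a + r_1 - 1\} \subseteq \mathbb{Z}_n$. Any two distinct members of $T_a$ differ by some $d \in \{1, \dots, r_1 - 1\}$, and since every prime factor of $d$ is strictly less than $r_1$ (the smallest prime factor of $n$), we have $\gcd(d, n) = 1$; thus $d$ is a unit and $|\mathcal S \cap T_a| \leq 1$. Counting incidences between $\mathcal S$ and the $T_a$ in two ways yields
\[
r_1 \, |\mathcal S| \;=\; \sum_{a \in \mathbb{Z}_n} |\mathcal S \cap T_a| \;\leq\; n,
\]
which gives the claim.

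Applying the extremal bound to $\mathcal B, \mathcal G \subseteq \mathbb{Z}_n$, both of size greater than $n/r_1$, produces unit-difference pairs that Lemma~\ref{lemma:relprimeZn} upgrades to relatively prime pairs of the same colors in $\mathbb{N}$. All hypotheses of Theorem~\ref{thm:relativelyprimegcd} are then met, so $c$ contains a rainbow solution to $x - y = z^k$, contradicting rainbow-freeness of $\overline c$. The principal obstacle is the extremal claim on $\mathbb{Z}_n$; the translate double-count disposes of it cleanly precisely because $1, 2, \dots, r_1 - 1$ are automatically units in $\mathbb{Z}_n$.
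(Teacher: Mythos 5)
Your proof is correct, and its overall skeleton matches the paper's: lift $\overline c$ to an $n$-periodic coloring of $\mathbb{N}$ via Lemma~\ref{lemma:modtoZ}, check the density hypothesis, produce a pair at a distance coprime to $n$ in each nondominant class, upgrade to relatively prime pairs with Lemma~\ref{lemma:relprimeZn}, and conclude with Theorem~\ref{thm:relativelyprimegcd}. The one place you genuinely diverge is in how the close pair is found. The paper applies Zhan's asymptotic Lemma~\ref{Lemma:smalldistance} to the lifted color class $\mathcal B'\subseteq\mathbb{N}$ with $n_0=r_1$ to get $i_1,i_1+k_1\in\mathcal B'$ with $k_1\le r_1-1$; you instead prove a self-contained finite extremal bound on $\mathbb{Z}_n$ (a set avoiding unit differences has at most $n/r_1$ elements) by double-counting incidences with the $n$ translates of a length-$r_1$ window. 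Your route is more elementary and stays entirely inside $\mathbb{Z}_n$ for that step, and it has the side benefit of making explicit the fact --- used silently in the paper when invoking Lemma~\ref{lemma:relprimeZn} --- that any difference $d\in\{1,\dots,r_1-1\}$ is automatically coprime to $n$ because $r_1$ is the smallest prime factor. The paper's version buys only the reuse of an already-quoted lemma; the content is the same pigeonhole fact. One small point to keep tidy: when you pass from a unit-difference pair in $\mathbb{Z}_n$ to integer representatives $i_1,i_2$ for Lemma~\ref{lemma:relprimeZn}, note that $|i_1-i_2|$ is congruent to $\pm d \bmod n$, so $\gcd(|i_1-i_2|,n)=\gcd(d,n)=1$ still holds regardless of which lifts you choose.
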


\begin{proof}
Define $c: \mathbb{N} \rightarrow \{R, G, B\}$ by $c(i) = \overline c( i \mod n)$. By Lemma \ref{lemma:modtoZ},  $c$ is rainbow-free for $x-y=z^k$.   Denote the corresponding color classes of $c$ as $\mathcal R'$, $\mathcal G'$, and $\mathcal B'$.  By Lemma \ref{lemma:reddominant}, there exists a dominant color, say $R$. Suppose by contradiction that $\min \{ |\mathcal{R}|,|\mathcal{B}| ,|\mathcal{G}| \} > \frac{n}{r_1}$.  Since $\limsup_{n' \rightarrow \infty} \left(\mathcal B'(n') - \frac{n'}{r_1}  \right) =\infty $, there exists an $i_1$ and $k_1 \leq r_1-1$ such that $i_1$, $i_1+k_1 \in \mathcal B'$ by Lemma \ref{Lemma:smalldistance}. By Lemma \ref{lemma:relprimeZn}, there exists a pair of relatively prime integers $j_1$, $j_2$ where $c(j_1) =c(i_1) =B$ and  $c(j_2)=c(i_1+k_1)=B$. Similarly there exists a pair of relatively prime integers in $\mathcal G$. By Theorem \ref{thm:relativelyprimegcd}, $c$ is not rainbow-free for $x-y=z^k$, a contradiction.  
\end{proof}

For primes,  we immediately get the following corollary.

\begin{cor}
\label{cor:smallestcolorclass}
Let $p$ be prime. Let $\overline c: \mathbb{Z}_n \rightarrow \{R, G, B\}$ be an exact 3-coloring of $\mathbb{Z}_p$ with corresponding color classes $\mathcal{R},\mathcal{B},\mathcal{G}$. 
If $\overline c$ is rainbow-free for $x-y=z^k$, then $\min \{ |\mathcal{R}|,|\mathcal{B}| ,|\mathcal{G}| \} =1$.

\end{cor}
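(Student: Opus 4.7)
The plan is simply to invoke Theorem \ref{thm: smallestcolorclass} with $n = p$. First I would dispose of the case $p = 2$: an exact 3-coloring requires a surjection onto a 3-element set, so the domain must contain at least three elements; since $|\mathbb{Z}_2| = 2$, there is no exact 3-coloring of $\mathbb{Z}_2$, and the corollary holds vacuously. So I may assume $p$ is an odd prime, which places us in the hypothesis of Theorem \ref{thm: smallestcolorclass}.

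Next, observe that when $n = p$ is prime, the smallest prime factor $r_1$ of $n$ equals $p$ itself, so the bound from Theorem \ref{thm: smallestcolorclass} specializes to
\[
\min \{|\mathcal R|, |\mathcal B|, |\mathcal G|\} \;\le\; \frac{n}{r_1} \;=\; \frac{p}{p} \;=\; 1.
\]
On the other hand, because $\overline c$ is an \emph{exact} 3-coloring, the function $\overline c$ is surjective onto $\{R, G, B\}$, so each color class $\mathcal R, \mathcal B, \mathcal G$ is nonempty and has size at least $1$. Combining the two inequalities gives equality.

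There is no real obstacle here; the corollary is a direct specialization of the preceding theorem, and the only subtlety is confirming that the parity hypothesis ($n$ odd) of Theorem \ref{thm: smallestcolorclass} is automatically met for all primes $p$ that admit an exact 3-coloring of $\mathbb{Z}_p$.
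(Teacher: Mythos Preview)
Your proof is correct and matches the paper's approach: the corollary is stated as an immediate consequence of Theorem~\ref{thm: smallestcolorclass} (with $n=p$ prime giving $r_1=p$ and hence $n/r_1=1$), and you have simply spelled out the details, including the harmless disposal of $p=2$ and the lower bound from surjectivity.
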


For the remainder of the section, we determine the structure of 3-colorings of $\mathbb{Z}_p$ that are rainbow-free for $x-y=z^k$ for $p$ an odd prime. 

When $p$ is prime and $a \neq 0$, the set $0, a^k, 2a^k, \ldots, (p-1)a^k $ forms a complete residue system for $\mathbb{Z}_p$. We generalize the notion of a dominant color to this complete residue system. We say that a $a^k$ string of length $\ell$ at position $ia^k$ consists of numbers $ia^k, (i+1)a^k, \ldots, (i+\ell-1) a^k$, where $i$, $\ell \in \mathbb{Z}_p$. An $a^k$-string is \emph{bichromatic} if it contains exactly two colors. A color is \emph{$a^k$-dominant} if every bichromatic string contains that color. As with dominant colors, if an $a^k$-dominant color exists for a 3-coloring it must be unique for the complete residue system $0, a^k, 2a^k, \ldots, (p-1)a^k$. 
Here we generalize Lemma  \ref{lemma:reddominant} to $a^k$-dominant colors. 

\begin{lemma}
\label{lemma:akdominant}
If $c: \mathbb{Z}_p \rightarrow \{R, G, B\}$ is an exact rainbow-free 3-coloring  for $x-y=z^k$, then $c(a)$ is $a^k$-dominant.
\end{lemma}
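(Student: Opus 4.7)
The plan is to mimic Lemma \ref{lemma:reddominant} essentially verbatim, with $a$ playing the role of $1$ and the complete residue system $\{0, a^k, 2a^k, \ldots, (p-1)a^k\}$ playing the role of the usual step-$1$ ordering on $\mathbb{N}$. The single new observation needed is that consecutive elements of an $a^k$-string, namely $ja^k$ and $(j+1)a^k$, differ by exactly $a^k$. Hence for every $j$, the triple $\bigl((j+1)a^k,\, ja^k,\, a\bigr)$ is a solution to $x - y = z^k$ in $\mathbb{Z}_p$, regardless of the value of $j$.

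From this observation it follows that whenever $c(ja^k) \neq c((j+1)a^k)$, at least one of these two colors must equal $c(a)$; otherwise the three elements $(j+1)a^k$, $ja^k$, $a$ would carry three pairwise distinct colors and form a rainbow solution, contradicting rainbow-freeness of $c$. To finish, I would argue by contradiction: suppose some bichromatic $a^k$-string of length $\ell$ at position $i_0 a^k$ uses two colors both different from $c(a)$. Being bichromatic, the string must contain two adjacent positions $ja^k, (j+1)a^k$ (with $i_0 \leq j < i_0 + \ell - 1$) at which the color changes, and by the previous step one of those two colors must be $c(a)$, contradicting the assumption that neither color appearing in the string equals $c(a)$. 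Therefore $c(a)$ appears in every bichromatic $a^k$-string, which is precisely the definition of $a^k$-dominance.

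The main (very mild) obstacle is purely notational: one must match the definitions carefully to see that ``adjacent positions in an $a^k$-string'' means elements of $\mathbb{Z}_p$ differing by $a^k$, which is exactly the right-hand side of $x-y=z^k$ when $z = a$. There is also the implicit hypothesis $a \neq 0$, which is built into the setup since otherwise $\{0, a^k, \ldots, (p-1)a^k\}$ fails to be a complete residue system and $a^k$-dominance is not well-defined. Beyond this bookkeeping there is essentially no content; the proof is a direct translation of Lemma \ref{lemma:reddominant} from the step-$1$ progression in $\mathbb{N}$ to the step-$a^k$ progression in $\mathbb{Z}_p$.
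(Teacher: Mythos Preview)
Your proof is correct and follows essentially the same approach as the paper: both hinge on the observation that $\bigl((j+1)a^k,\, ja^k,\, a\bigr)$ is a solution to $x-y=z^k$, forcing $c(a)$ to appear at any color change between adjacent positions of an $a^k$-string. Your version simply spells out the reduction from ``every bichromatic string contains $c(a)$'' to ``every adjacent color change involves $c(a)$'' more explicitly than the paper does.
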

\begin{proof}

Without loss of generality, assume $c(a)=R$. It suffices to show that if $c(ia^k) \neq c\left((i+1)a^k\right) $, then eiter $c(ia^k) = R$ or $c\left((i+1)a^k \right)=R $. Since $c$ is rainbow-free for $x-y=z^k$ and  $\left((i+1)a^k, ia^k, a \right)$ is a solution to $x-y=z^k$, we get the desired conclusion. 

\end{proof}

\begin{lemma}
\label{lemma:negativesamecolor}
If $c: \mathbb{Z}_p \rightarrow \{R, G, B\}$ is an exact rainbow-free 3-coloring for $x-y=z^k$ then $c(a)=c(-a)$. 

\end{lemma}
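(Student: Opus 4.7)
The plan is to apply Lemma \ref{lemma:akdominant} to both $a$ and $-a$, and then observe that being ``$a^k$-dominant'' and being ``$(-a)^k$-dominant'' describe the same condition on the coloring, so the two dominant colors coincide. The case $a=0$ is immediate since $-a=0$ as well, so I may assume $a\neq 0$. Because $p$ is an odd prime, $a^k$ is a unit in $\mathbb{Z}_p$, and $a\neq -a$.

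First I would apply Lemma \ref{lemma:akdominant} twice to conclude that $c(a)$ is $a^k$-dominant and $c(-a)$ is $(-a)^k$-dominant. Next I would check that the collection of $a^k$-strings and the collection of $(-a)^k$-strings coincide as unordered subsets of $\mathbb{Z}_p$. When $k$ is even this is trivial because $(-a)^k=a^k$. When $k$ is odd, $(-a)^k=-a^k$, and the $(-a)^k$-string of length $\ell$ at position $i(-a)^k$ is
\[
\{-ia^k,\ -(i+1)a^k,\ \ldots,\ -(i+\ell-1)a^k\},
\]
which is exactly the $a^k$-string of length $\ell$ at position $-(i+\ell-1)a^k$, just traversed in reverse. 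Since ``bichromatic'' depends only on the unordered set of colors appearing in the string, the bichromatic $a^k$-strings are precisely the bichromatic $(-a)^k$-strings, so the two notions of dominance agree.

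Finally I would verify that the $a^k$-dominant color is actually unique, ensuring that $c(a)$ and $c(-a)$ both equal it. Since $c$ is an \emph{exact} $3$-coloring, $c$ is not constant on $\mathbb{Z}_p=\{0,a^k,2a^k,\ldots,(p-1)a^k\}$, so some consecutive pair $ja^k,(j+1)a^k$ is bichromatic; the remark after the definition of $a^k$-dominance then gives uniqueness, forcing $c(a)=c(-a)$. I do not foresee a genuine obstacle here; the only delicate bookkeeping is the re-indexing in the odd-$k$ case, which is harmless because bichromaticity is insensitive to the direction in which a string is traversed.
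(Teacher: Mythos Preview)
Your proposal is correct and follows essentially the same approach as the paper: apply Lemma~\ref{lemma:akdominant} to both $a$ and $-a$, observe that $a^k$-dominance and $(-a)^k$-dominance coincide (trivially for even $k$, by reversal for odd $k$), and invoke uniqueness of the dominant color. Your treatment is in fact slightly more careful than the paper's, since you explicitly justify uniqueness by noting that an exact $3$-coloring must contain some bichromatic consecutive pair in the $a^k$-residue system.
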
 
\begin{proof} Let $a \neq 0$. 
If $k$ is even, $a^k= (-a)^k$. By Lemma \ref{lemma:akdominant}, $c(a)$ is $a^k$- dominant and $c(-a)$ is $a^k$-dominant. Since $a^k$-dominant colors are unique, $c(a)=c(-a)$. 
Now consider $k$ odd. Note that if $R$ is an $a^k$-dominant color for $0, a^k, 2a^k, \ldots, (p-1)a^k$, it is also $(-a)^k$-dominant for $0, (-a)^k, \ldots, (p-1)a^k$, since the latter is the former in reverse. Since dominant colors are unique, $c(a)=c(-a)$. 

\end{proof}

The following corollary follows immediately from Corollary \ref{cor:smallestcolorclass} and Lemma \ref{lemma:negativesamecolor}.
\begin{cor}
\label{cor:0coloreduniquely}
Let $c: \mathbb{Z}_p \rightarrow \{R, G, B\}$ be an exact rainbow-free coloring for $x-y=z^k$. Suppose $c(0)=B$, then $\mathcal B=\{0\}$. That is, $0$ is the only element in its color class. 
\end{cor}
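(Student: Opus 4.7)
The plan is to use the two ingredients that the corollary names (Corollary~\ref{cor:smallestcolorclass} and Lemma~\ref{lemma:negativesamecolor}) in tandem, together with the crucial fact that $p$ is odd so that $a \mapsto -a$ has only one fixed point on $\mathbb{Z}_p$, namely $0$.

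First I would argue by contradiction: suppose $\mathcal{B}$ contains some element $b \neq 0$ in addition to $0$. By Lemma~\ref{lemma:negativesamecolor}, $c(-b) = c(b) = B$, and because $p$ is odd and $b \neq 0$ we have $-b \neq b$, nor does either equal $0$. Hence $\{0, b, -b\}$ are three distinct elements of $\mathcal{B}$, so $|\mathcal{B}| \geq 3$.

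Next I would invoke Corollary~\ref{cor:smallestcolorclass} to conclude that $\min\{|\mathcal{R}|, |\mathcal{B}|, |\mathcal{G}|\} = 1$. Since $|\mathcal{B}| \geq 3$, the singleton color class must be $\mathcal{R}$ or $\mathcal{G}$; without loss of generality, say $\mathcal{R} = \{a\}$ for some $a \in \mathbb{Z}_p$. Applying Lemma~\ref{lemma:negativesamecolor} once more gives $c(-a) = c(a) = R$, so $-a \in \mathcal{R} = \{a\}$, forcing $-a = a$, i.e.\ $a = 0$ (again using that $p$ is odd). But then $c(0) = R$, contradicting the hypothesis $c(0) = B$. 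The same contradiction arises if $\mathcal{G}$ is the singleton. Therefore $|\mathcal{B}| = 1$, which is exactly $\mathcal{B} = \{0\}$.

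There is no real obstacle here; the only thing to be careful about is making explicit use of the odd-prime hypothesis (through the fact that $2$ is invertible, so $-a = a$ implies $a = 0$), which is what allows Lemma~\ref{lemma:negativesamecolor} to turn a single non-zero blue element into a pair of them and to rule out any other color class from being a singleton.
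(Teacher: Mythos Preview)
Your argument is correct and is exactly the intended fleshing-out of the paper's claim that the corollary ``follows immediately from Corollary~\ref{cor:smallestcolorclass} and Lemma~\ref{lemma:negativesamecolor}.'' The only minor redundancy is that you do not actually need $|\mathcal{B}|\ge 3$: once you assume a nonzero $b\in\mathcal{B}$ you already have $|\mathcal{B}|\ge 2$, which is enough to force the singleton class to be $\mathcal{R}$ or $\mathcal{G}$, and the rest of your second paragraph goes through unchanged.
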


To finalize our classification of rainbow-free 3-colorings for $x-y=z^k$ over $\mathbb{Z}_p$, we consider the associated digraph from powers modulo $p$. For any function $f: \mathbb{Z}_m \rightarrow \mathbb{Z}_m$, construct a digraph  that has the elements of $\mathbb{Z}_m$ as vertices and a directed edge $(a,b)$ if and only if $f(a) \equiv b \mod \m$. 

In some cases, the digraph associated to a function $f(x)$ gives additional structure on rainbow-free colorings for the equation $x-y=f(x)$.
If $c$ is a coloring of $\mathbb{Z}_n$ and $D$ a component of $G$,  let $c(D)=\{c(a)| a \in D\}$. 
A component $D$ is \emph{monochromatic} if $|c(D)|=1$. 

\begin{lemma}
\label{lemma:monochromaticcomponent}
Let $G$ be a digraph associated to a function $f(x)$ on $\mathbb{Z}_n$  and let $D$ be a component of $G$. Let $c: \mathbb{Z}_n \rightarrow [t]$ be a rainbow-free exact $t$-coloring of $\mathbb{Z}_n$ for the equation $x-y=f(x)$.  Suppose that $c(0) \not \in C(D)$. 
Then $C$ is monochromatic. 
\end{lemma}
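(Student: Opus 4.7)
The plan is to exploit the trivial solution $(f(a), 0, a)$ of $x - y = f(z)$, together with the standard structure of a functional digraph, to propagate a single color throughout $D$. The argument has essentially two ingredients: rainbow-freeness forces $c(a) = c(f(a))$ whenever the directed edge $a \to f(a)$ in $D$ is not a self-loop, and the non-self-loop subgraph of a component of a functional digraph is still connected as an undirected graph.

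First, since $c(0) \notin c(D)$, we have $0 \notin D$. Now fix any $a \in D$ with $f(a) \neq a$. Then $(f(a), 0, a)$ is a solution to $x - y = f(z)$, and its three entries are pairwise distinct: $a \neq f(a)$ by hypothesis, while $a \neq 0$ and $f(a) \neq 0$ because $a, f(a) \in D$ but $0 \notin D$. By rainbow-freeness, the colors $c(f(a))$, $c(0)$, and $c(a)$ cannot all be distinct. Since $c(0) \notin c(D)$ while $c(a), c(f(a)) \in c(D)$, the only possible collision is $c(a) = c(f(a))$.

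Next I would verify that $D$, with its self-loops deleted, remains connected as an undirected graph. A component of a functional digraph consists of a unique directed cycle with in-trees attached to its vertices, and self-loops can occur only at fixed points that sit alone on a cycle of length one. If the cycle in $D$ has length at least two, every cycle edge and every tree edge is a non-self-loop, so connectivity after deleting self-loops is immediate. If instead the cycle is a lone fixed point $z_0$, then any $a \in D \setminus \{z_0\}$ still reaches $z_0$ via the forward orbit $a \to f(a) \to f^2(a) \to \cdots \to z_0$, each of whose edges is a non-self-loop; the trivial case $D = \{z_0\}$ is monochromatic by default. Combining the two steps, $c$ is constant on every non-self-loop edge, and every pair of vertices of $D$ is joined by a path of such edges, so $c$ is constant throughout $D$.

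The only subtle point in the argument is handling the fixed-point case in the connectivity step; otherwise the proof is an essentially immediate application of the rainbow-free condition to the simplest possible solutions of $x - y = f(z)$, namely those with $y = 0$.
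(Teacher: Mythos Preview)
Your proof is correct and follows essentially the same approach as the paper: both use the solution $(f(a),0,a)$ together with $c(0)\notin c(D)$ to force $c(a)=c(f(a))$ along edges, then invoke connectivity of $D$. The paper's version is simply the contrapositive and is terser—it observes that a non-monochromatic connected component must contain two adjacent (hence distinct) vertices $a$ and $f(a)$ of different colors, immediately yielding the rainbow triple—whereas you spell out the self-loop and connectivity details explicitly.
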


\begin{proof}
Suppose that $C$ is not monochromatic. 
Then there exists two adjacent vertices $a$ and $f(a)$ in $C$ such that $c(a) \neq c((f(a))$. Since $c(0) \not \in c(D)$, $(f(a), 0, a)$ is a rainbow solution to $x-y=f(x)$. 
\end{proof}

Throughout the rest of the section let $G_p^k$ be the digraph associated to the function $f(x)=x^k \mod p$. The structure of such digraphs has been well-studied, for example in \cite{SomerKrizek}, \cite{Lucheta}, \cite{Blanton}, \cite{Wilson}, and \cite{DresdenTu}.

For example, when $k=2$ and $p=10$, we have the digraph as shown in \ref{digraphexample}

\begin{figure}\label{digraphexample}

\includegraphics[scale=1]{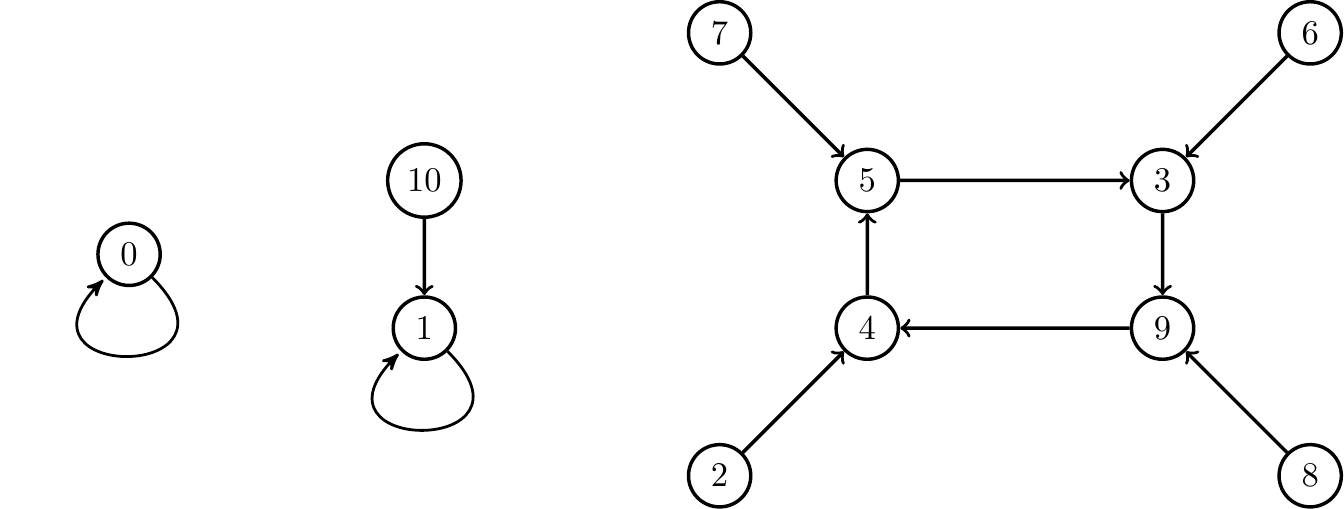}

\caption{Function digraph for $f(x)=x^2$ over $\mathbb{Z}_{11}$}
\end{figure}

We can use digraphs to  classify exact  3-colorings of $\mathbb{Z}_p$ that are rainbow-free for $x-y=z^k$. 
\begin{thm}
\label{thm:classifyrainbowfree3colorings} Let $c: \mathbb{Z}_p \rightarrow \{R, G, B\}$ be an exact  3-coloring. 
Then $c$ is rainbow-free for $x-y=z^k$ if and only if the following hold:

\begin{enumerate} \item $0$ is the only element in its color class \item  every component of $G_p^k$ is monochromatic \item $c(a)=c(-a)$ for all $a \in \mathbb{Z}_p$. 

\end{enumerate}
\end{thm}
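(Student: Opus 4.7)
The biconditional splits into two implications, and pleasingly, each of conditions (1)--(3) in the forward direction is essentially an already-established result. Condition (1) is exactly the content of Corollary \ref{cor:0coloreduniquely}, and condition (3) is Lemma \ref{lemma:negativesamecolor}. For condition (2), I would invoke Lemma \ref{lemma:monochromaticcomponent}: since $p$ is prime, the congruence $x^k \equiv 0 \pmod{p}$ forces $x \equiv 0$, so $\{0\}$ is its own component in $G_p^k$ and is trivially monochromatic; every other component $D$ contains no zero, and by condition (1) the color of $0$ appears nowhere else, so $c(0) \notin c(D)$. Lemma \ref{lemma:monochromaticcomponent} then yields monochromaticity of each such $D$.

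For the reverse direction, assume conditions (1)--(3) hold and suppose toward contradiction that $(x,y,z)$ is a rainbow solution to $x - y = z^k$. The plan is a short case analysis on which of $x$, $y$, $z$ equal $0$. If $z = 0$, then $x = y$, contradicting distinct colors. If $y = 0$, then $x = z^k$; since $z$ and $z^k$ are adjacent in $G_p^k$ and hence lie in the same component, condition (2) gives $c(x) = c(z^k) = c(z)$, contradicting rainbow-ness. If $x = 0$, then $y = -z^k$, and conditions (2) and (3) combine to give $c(y) = c(-z^k) = c(z^k) = c(z)$. Finally, if $x$, $y$, $z$ are all nonzero, condition (1) implies that none of $c(x)$, $c(y)$, $c(z)$ equals $c(0)$, leaving only two available colors for three values that are supposed to be pairwise distinct --- impossible.

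The argument is mostly a synthesis of earlier results, so no real technical obstacle arises. The one point that warrants attention is treating the component of $0$ in the forward direction: Lemma \ref{lemma:monochromaticcomponent} hypothesizes $c(0)\notin c(D)$, which fails when $0\in D$, so one must separately note that in $\mathbb{Z}_p$ with $p$ prime this component is the singleton $\{0\}$ and therefore automatically monochromatic. Everything else is bookkeeping.
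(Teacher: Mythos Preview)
Your proposal is correct and follows essentially the same approach as the paper: the forward direction cites Corollary~\ref{cor:0coloreduniquely}, Lemma~\ref{lemma:negativesamecolor}, and Lemma~\ref{lemma:monochromaticcomponent} exactly as the paper does, and the reverse direction is the same case analysis on which coordinate of a putative rainbow solution equals $0$. Your explicit remark that $\{0\}$ is a singleton component in $G_p^k$ (so that Lemma~\ref{lemma:monochromaticcomponent} applies to all other components) is a slight elaboration of the paper's terse ``since $0$ is not in any other component,'' but the argument is otherwise identical.
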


\begin{proof}
Suppose $c$ is rainbow-free for $x-y=z^k$. Then by Corollary \ref{cor:0coloreduniquely}, $0$ is in its own color class. By Lemma \ref{lemma:monochromaticcomponent} and since $0$ is not in any other component, every component of $G_p^k$ is monochromatic. By Lemma \ref{lemma:negativesamecolor}, $c(a)=c(-a)$ for all $a \in \mathbb{Z}_p$. 

Now suppose that $0$ is the only element in its color class, every component of $G_p^k$ is monochromatic, and $c(a)=c(-a)$ for all $a \in \mathbb{Z}_p$. We show that $c$ is rainbow-free. Let $(a_1, a_2, a_3)$ be a rainbow solution to $x-y=z^k$. Then one of $a_1$, $a_2$, $a_3$ is 0 since $0$ is the only element in its color class. If $a_3=0$, then $a_1=a_2$, contradicting that $a_1$ and $a_2$ are distinct colors. If $a_2=0$, $a_1=a_3^k$, so there is a directed edge $(a_3, a_1)$ in the digraph $G_p^k$, a contradiction since the components of $G_p^k$ are monochromatic. Finally, suppose that $a_1=0$. Then $-a_2=a_3^k$. There is a directed edge $(a_3, -a_2)$, so $c(a_3)=c(-a_2)=c(a_2)$, a contradiction. Thus the coloring $c$ is rainbow-free. 

\end{proof}

As repeated iteration of $f(x)= x^k$ leads to cycles, $G_p^k $ have the following property. 

\begin{lemma} [\cite{Lucheta}, 9]
\label{lemma:componentscycles}
Every component of $G_p^k$ contains exactly one cycle.

\end{lemma}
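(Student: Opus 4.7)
The plan is to exploit the fact that $G_p^k$ is a \emph{functional digraph}: since $f(x)=x^k$ is a function on $\Z_p$, every vertex has out-degree exactly $1$. This single structural fact drives both the existence and the uniqueness of a cycle in each weakly connected component.

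For existence, I would fix any vertex $v$ in a component $D$ and consider the forward orbit $v, f(v), f^2(v), \ldots$. Since $\Z_p$ is finite, some value must repeat, say $f^s(v)=f^{s+t}(v)$ with $t\geq 1$ minimal. Then the vertices $f^s(v), f^{s+1}(v), \ldots, f^{s+t-1}(v)$ form a directed cycle of length $t$ lying entirely in $D$.

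For uniqueness, suppose for contradiction that $D$ contains two distinct directed cycles $C$ and $C'$. For every vertex $w \in D$ the forward orbit eventually enters a cycle, and because each vertex has out-degree $1$, this terminal cycle is uniquely determined by $w$; call it $\operatorname{cyc}(w)$. I would show $\operatorname{cyc}$ is constant along every edge of the underlying undirected graph: if $f(u)=w$, then the forward orbit of $u$ is $w,f(w),\ldots$, so $\operatorname{cyc}(u)=\operatorname{cyc}(w)$. Since $D$ is weakly connected, any two vertices are joined by an undirected path, hence $\operatorname{cyc}$ is constant on all of $D$. But picking a vertex from $C$ and a vertex from $C'$ would give $\operatorname{cyc}=C$ and $\operatorname{cyc}=C'$ respectively, contradicting $C\neq C'$.

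The argument is entirely combinatorial and uses no number theory beyond the observation that $f(x)=x^k$ is a well-defined function on $\Z_p$. The only subtle step is the invariance of $\operatorname{cyc}$ under undirected edges, which works precisely because out-degree equals $1$; if in-degree were restricted instead, the same reasoning would fail. I do not expect any serious obstacle, since this is a standard fact about functional digraphs and is exactly the form cited from \cite{Lucheta}.
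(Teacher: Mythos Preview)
Your argument is correct and is the standard proof that every weakly connected component of a functional digraph on a finite set contains exactly one cycle. One tiny remark: when you verify that $\operatorname{cyc}$ is constant along an undirected edge, you only explicitly treat the case $f(u)=w$; the other orientation $f(w)=u$ is of course symmetric, but it is worth saying so, since weak connectivity uses both directions.

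As for comparison with the paper: the paper does not supply its own proof of this lemma at all. It is stated with a citation to \cite{Lucheta} (their Theorem~9) and used as a black box. So your self-contained combinatorial argument is not a different route so much as a route where the paper takes none; you are simply filling in what the paper imports from the literature.
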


The following theorem determines the number of components in the digraph $G_p^k$. In \cite{Lucheta}, Lucheta, Miller, and Retier consider digraphs whose vertices include only nonzero residues. We restate the theorems here for digraphs whose vertices are the elements of  $\mathbb{Z}_p$.

\begin{thm}[\cite{Lucheta}, 15] \label{theorem:cyclevertex} Let $p$ be an odd prime. 
Let $p-1=wt$, where $t$ is the largest factor of $p-1$ relatively prime to $k$. Let $c \neq 0$ be a nonzero vertex of $G_p^k$. The vertex $a$ is a cycle vertex of $G_p^k$ if and only if $\ord_p a | t$. 

\end{thm}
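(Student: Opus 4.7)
The plan is to reduce the cycle-vertex condition to a divisibility question about the multiplicative order and then match it with the definition of $t$. Since $f(x) = x^k$ iterates as $f^m(a) = a^{k^m}$, the nonzero vertex $a$ lies on a cycle if and only if $a^{k^m} = a$ in $\mathbb{Z}_p$ for some $m \geq 1$. Because $a \neq 0$ is invertible mod $p$, this is equivalent to $a^{k^m - 1} \equiv 1 \pmod{p}$, i.e. $\ord_p(a) \mid k^m - 1$ for some $m \geq 1$. So the whole theorem reduces to showing that, for $d := \ord_p(a)$, the condition ``$d \mid k^m - 1$ for some $m \geq 1$'' is equivalent to $d \mid t$.

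The first step is to show: $d \mid k^m - 1$ for some $m \geq 1$ if and only if $\gcd(d, k) = 1$. For the easy direction ($\Leftarrow$), if $\gcd(d,k)=1$ then $k$ is a unit in $\mathbb{Z}/d\mathbb{Z}$, so it has a finite multiplicative order $m$ there, giving $k^m \equiv 1 \pmod{d}$. For the other direction, if some prime $q$ divides both $d$ and $k$, then $q \mid k^m$ and (by hypothesis) $q \mid k^m - 1$, forcing $q \mid 1$, a contradiction; hence $\gcd(d,k)=1$.

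The second step is to combine this with the definition of $t$. Since $d = \ord_p(a)$ always divides $p-1$, and $t$ is by definition the largest divisor of $p-1$ coprime to $k$, any divisor of $p-1$ that is coprime to $k$ must divide $t$ (because such a divisor, together with $t$, gives a divisor of $p-1$ coprime to $k$, which by maximality must equal $t$). Conversely, any divisor of $t$ is coprime to $k$. Thus $\gcd(d, k) = 1$ is equivalent to $d \mid t$, which combined with the reduction above completes the equivalence.

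I do not anticipate a real obstacle here; the argument is essentially bookkeeping with orders and gcds. The only subtlety worth stating cleanly is the maximality characterization of $t$: writing $p-1 = wt$ with $\gcd(t,k)=1$, every prime dividing $w$ also divides $k$, which is what ensures that any divisor of $p-1$ coprime to $k$ sits inside $t$.
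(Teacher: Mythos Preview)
Your argument is correct. The reduction of the cycle condition to $\ord_p(a)\mid k^m-1$ for some $m\ge 1$, then to $\gcd(\ord_p(a),k)=1$, and finally to $\ord_p(a)\mid t$ via the maximality of $t$ is exactly the standard route; the only place one might want an extra word is the claim that every divisor of $p-1$ coprime to $k$ divides $t$, which you justify correctly by noting that every prime dividing $w$ must divide $k$.

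As for comparison: the paper does not actually give a proof of this statement. It is quoted from \cite{Lucheta} (Theorem~15 there) and used as a black box to count cycle vertices and components of $G_p^k$. Your write-up is essentially the argument one would expect in \cite{Lucheta}, so there is no meaningful methodological difference to discuss.
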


From this theorem \cite{Lucheta} Corollary 16 follows that there are precisely $t+1$ vertices in cycles.

Let $p$  and $t$ be as in Theorem  \ref{theorem:cyclevertex}.  Then $G_p^k$ has exactly 2 components if and only if $t=1$. Using the proposition, the prime factorization of $k$ and $p-1$ determine the number of components of $G^k_p$

\begin{prop} \label{cor: evenk2comp}  Let $k$ be even
If $k = 2^{\alpha_0} q_1^{\alpha_1}q_2^{\alpha_2} \ldots q_\ell^{\alpha_\ell}$, $q_i$ prime for $1 \leq i \leq \ell$, $\alpha_i \geq 1$, then the digraph $G^k_p$ has two components if and only if $p-1 = 2^{\beta_0}  q_1^{\beta_1}q_2^{\beta_2} \ldots q_\ell^{\beta_\ell}$ where $\beta_i \geq 0$.

\end{prop}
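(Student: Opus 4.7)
The plan is to leverage the observation stated immediately before the proposition: $G_p^k$ has exactly two components if and only if $t = 1$, where $p-1 = wt$ and $t$ is the largest divisor of $p-1$ relatively prime to $k$. With that observation in hand, the proposition reduces to a purely number-theoretic characterization of when $t = 1$ in terms of the prime factorizations of $k$ and $p-1$.

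First I would unpack the definition of $t$. Writing $p-1$ in its prime factorization as $p-1 = \prod_{i} p_i^{e_i}$, the largest divisor of $p-1$ coprime to $k$ is obtained by deleting every prime power whose prime base divides $k$; that is, $t = \prod_{p_i \,\nmid\, k} p_i^{e_i}$. Consequently $t = 1$ if and only if every prime factor of $p-1$ also divides $k$.

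Next I would observe that the set of primes dividing $k = 2^{\alpha_0} q_1^{\alpha_1} \cdots q_\ell^{\alpha_\ell}$ is precisely $\{2, q_1, \ldots, q_\ell\}$ (where $2$ appears because $k$ is even and each $\alpha_i \geq 1$). Plugging this into the characterization above, $t = 1$ is equivalent to the statement that every prime factor of $p-1$ lies in $\{2, q_1, \ldots, q_\ell\}$, which is exactly the condition $p - 1 = 2^{\beta_0} q_1^{\beta_1} \cdots q_\ell^{\beta_\ell}$ for some non-negative integers $\beta_i$. Chaining these equivalences with the observation that two components corresponds to $t = 1$ completes the proof.

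All of the analytic content is carried by Theorem \ref{theorem:cyclevertex} and the $t+1$-cycle-vertices corollary that was cited just before the proposition; the argument here is essentially bookkeeping. The only subtlety, and hence the main (mild) obstacle, is articulating cleanly that the ``largest factor of $p-1$ coprime to $k$'' depends only on the set of primes dividing $k$, not on the multiplicities $\alpha_i$, so that the condition on $p-1$ can be stated using the same prime base $\{2, q_1, \ldots, q_\ell\}$ with arbitrary non-negative exponents $\beta_i$.
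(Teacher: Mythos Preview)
Your proposal is correct and follows essentially the same route as the paper: both reduce the question to the number-theoretic equivalence $t=1$ if and only if every prime factor of $p-1$ lies in $\{2,q_1,\ldots,q_\ell\}$. The only organizational difference is that you invoke the pre-stated observation that ``two components $\Longleftrightarrow$ $t=1$'' as a black box, whereas the paper's proof supplies that justification in-line using the $t+1$ cycle-vertex count together with Lemma~\ref{lemma:componentscycles} (each component contains exactly one cycle, and $0,1$ are always length-one cycles).
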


\begin{proof}

As a result of Corollary 16 in \cite{Lucheta}, the number of cycle vertices in $G_p^k$ is the $t$ as in the statement of Theorem \ref{theorem:cyclevertex}. It follows from that theorem that $t=1$ if and only if $p-1 = 2^{\beta_0}  q_1^{\beta_1}q_2^{\beta_2} \ldots q_\ell^{\beta_\ell}$.

Suppose $t>1$. $G_p^k$ has at least 3 cycle vertices.  Since $0^k = 0$ and $1^k =1$, there are at least 2 cycles of length 1, so there must be at least one vertex on a different cycle. Thus $G_p^k$ has more than 2 components.

If $t=1$, the only cycles are the length 1 cycles formed by $0$ and 1 so  $G_p^k$ has two components. 

\end{proof}

\begin{prop} \label{cor: oddk3comp} Let $k>3$ be odd and let    $k =  q_1^{\alpha_1}q_2^{\alpha_2} \ldots q_\ell^{\alpha_\ell}$, $q_i>2$ prime for $1 \leq i \leq \ell$, $\alpha_i \geq 1$.

Then the digraph $G_p^k$ has exactly three components if and only if $p-1 = 2^{\beta_0} q_1^{\beta_1}q_2^{\beta_2} \ldots q_\ell^{\beta_\ell}$ where $\beta_i \geq 0$.

\end{prop}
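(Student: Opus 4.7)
The plan is to mirror the approach of the preceding Proposition, using Theorem \ref{theorem:cyclevertex} to count cycle vertices and then translating the resulting condition on $t$ into a condition on the factorization of $p-1$. The key new observation for odd $k$ is that $(-1)^k = -1$, so $-1$ joins $0$ and $1$ as a fixed point of $x \mapsto x^k$, always contributing a third component.

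First I would note that since $p$ is an odd prime, $2 \mid p-1$, and since $k$ is odd, $\gcd(2,k) = 1$, so $2 \mid t$ always. Consequently the set of cycle vertices equals $\{0\}$ together with the order-$t$ subgroup $H$ of $(\mathbb{Z}/p)^*$, and always contains $\{0, 1, -1\}$, each of which is a fixed point of $x \mapsto x^k$; by Lemma \ref{lemma:componentscycles}, $G_p^k$ always has at least three components. Next I would show that the count is exactly three if and only if $t = 2$: when $t = 2$, the cycle vertex set has size $t+1 = 3$ and equals $\{0, 1, -1\}$, so the three fixed-point cycles are the only cycles, giving exactly three components; when $t > 2$, we have $t \geq 4$, and for any $a \in H \setminus \{1, -1\}$ the map $x \mapsto x^k$ on $H$ (which is a bijection because $\gcd(k, t) = 1$) sends $a$ into an orbit disjoint from the fixed-point orbits $\{1\}$ and $\{-1\}$, producing a fourth cycle and hence a fourth component.

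Finally I would translate the condition $t = 2$ into the factorization stated in the proposition, using the defining property of $t$ and the fact that all prime divisors of $k$ are odd: every power of $2$ in $p-1$ and every prime power of $p-1$ whose base is not among $\{q_1, \ldots, q_\ell\}$ divides $t$, so $t = 2$ forces $p-1$ to have exactly one factor of $2$ and all its odd prime factors in $\{q_1, \ldots, q_\ell\}$. The main subtlety is the orbit-disjointness argument in the case $t > 2$; after that, the proof is a direct generalization of the even case, with the hypothesis $k > 3$ playing no essential role beyond ruling out a degenerate case in the statement.
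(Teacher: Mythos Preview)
Your approach is essentially the paper's: both arguments use Theorem~\ref{theorem:cyclevertex} (and its corollary on the count $t+1$ of cycle vertices) together with the observation that $0$, $1$, and $-1$ are fixed points of $x\mapsto x^k$ when $k$ is odd, and then characterize exactly when $t=2$. Your orbit-disjointness argument for $t>2$ is a slightly more explicit version of the paper's ``so there must be at least one vertex on a different cycle.''

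One point worth flagging: your translation of $t=2$ is sharper than what the proposition actually asserts. You correctly observe that since $k$ is odd, the full power of $2$ in $p-1$ divides $t$, so $t=2$ forces $p-1$ to have \emph{exactly one} factor of $2$ (i.e.\ $\beta_0=1$). The proposition as stated allows arbitrary $\beta_0\ge 0$, and the paper's proof simply asserts ``we see that $t=2$ if and only if $p-1 = 2^{\beta_0} q_1^{\beta_1}\cdots q_\ell^{\beta_\ell}$'' without checking this. In fact the stated equivalence fails: for $k=5$ and $p=41$ one has $p-1=2^3\cdot 5$, which fits the paper's form, yet $t=8$ and $G_{41}^5$ has more than three components. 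So your argument is right and the discrepancy lies in the paper's statement, not in your reasoning.
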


\begin{proof}
As a result of Corollary 16 in \cite{Lucheta}, the number of cycle vertices in $G_p^k$ is the $t$ as in the statement of Theorem \ref{theorem:cyclevertex}. 
We see that $t=2 $ if and only if $p-1 = 2^{\beta_0} q_1^{\beta_1}q_2^{\beta_2} \ldots q_\ell^{\beta_\ell}$ where $\beta_i \geq 0$.

Suppose $t>2$. By \ref{theorem:cyclevertex}, $G_p^k$ has at least 4 cycle vertices.  Since $0^k = 0$, $(-1)^k = -1$, and $1^k =1$, there are at least 3 cycles of length 1, so there must be at least one vertex on a different cycle. Thus $G_p^k$ has more than 3 components.

If $t=2$, the only cycles are the length 1 cycles formed by $0$, $1$, and $-1$ so  $G_p^k$ has three components. 

\end{proof}

When $k$ is odd, $-a$ may not be in the same component as $a$, but the components are symmetric. 

When an even digraph has at least three components in $\mathbb{Z}_p$, we can give a rainbow-free 3-coloring of $\mathbb{Z}_p$ by coloring the component with $0$ using one color, the component with 1 a second color, and coloring everything else a third color. For instance, in the digraph in \ref{digraphexample}, the coloring $c(0)=R$, $c(1)=c(10)=B$, and $c(2)=\ldots=c(9)=G$ gives a rainbow-free coloring of $\mathbb{Z}_{11}$ for $x-y=z^2$.

We now compute the rainbow number when $k$ is even.

\begin{thm}
Suppose $k= 2^{\alpha_0} q_1^{\alpha_1}q_2^{\alpha_2} \ldots q_\ell^{\alpha_\ell}$, $q_i$ prime for $1 \leq i \leq \ell$, $\alpha_i \geq 1$. 

\[\rb(\mathbb{Z}_p, x-y=z^k) = \begin{cases}3 & \text{ if } p-1= 2^{\beta_0}  q_1^{\beta_1}q_2^{\beta_2} \ldots q_\ell^{\beta_\ell} \text{ where }\beta_i \geq 0,  \\ 
4 & \text{ otherwise }. \end{cases}  \]
    
\end{thm}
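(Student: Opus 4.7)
The plan is to prove a universal upper bound $\rb(\mathbb{Z}_p, x-y=z^k)\leq 4$ by a color-merging argument, and then to distinguish $\rb=3$ from $\rb=4$ by examining whether a rainbow-free exact $3$-coloring exists. For the upper bound, suppose for contradiction that $c:\mathbb{Z}_p\to\{R,G,B,Y\}$ is an exact rainbow-free $4$-coloring. If we collapse any two colors into one, the resulting exact $3$-coloring is still rainbow-free: a rainbow triple in the merged coloring uses three distinct merged-colors, hence three distinct colors in $c$, which would give a rainbow triple in $c$.

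First merge $B$ and $Y$. By Corollary \ref{cor:smallestcolorclass} applied to this rainbow-free exact $3$-coloring of $\mathbb{Z}_p$, some class has size exactly $1$. Since $|B\cup Y|\geq 2$, the singleton must be $R$ or $G$; by Lemma \ref{lemma:negativesamecolor} any singleton color class in $\mathbb{Z}_p$ (with $p$ odd) equals $\{0\}$, the unique fixed point of $a\mapsto -a$. Relabel so that $R=\{0\}$. Now merge $R$ and $G$ instead: the class $R\cup G$ has size at least $2$ (since $G$ is non-empty), and if $|B|=1$ then the same symmetry argument forces $B=\{0\}$, contradicting $c(0)=R$; likewise $|Y|\geq 2$. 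So all three classes of this second merged $3$-coloring have size at least $2$, contradicting Corollary \ref{cor:smallestcolorclass}. Hence no exact rainbow-free $4$-coloring exists and $\rb(\mathbb{Z}_p,x-y=z^k)\leq 4$. The trivial lower bound $\rb\geq 3$ always holds since a $2$-coloring contains no three distinctly colored elements.

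For the dichotomy, apply Theorem \ref{thm:classifyrainbowfree3colorings}: each color class of a rainbow-free exact $3$-coloring is a union of components of $G_p^k$ (with $\{0\}$ its own class and $c(a)=c(-a)$). When $p-1=2^{\beta_0}q_1^{\beta_1}\cdots q_\ell^{\beta_\ell}$, Proposition \ref{cor: evenk2comp} gives that $G_p^k$ has exactly two components, leaving at most two non-empty classes, so no rainbow-free exact $3$-coloring exists and $\rb=3$. Otherwise $G_p^k$ has at least three components; since $k$ is even, $(-a)^k=a^k$ so $a$ and $-a$ share the successor $a^k$ and lie in the same component, making every component closed under negation. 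Place $\{0\}$ in one color class and distribute the at-least-two non-zero components into the remaining two classes so that each is non-empty; then all three conditions of Theorem \ref{thm:classifyrainbowfree3colorings} hold, producing a rainbow-free exact $3$-coloring and $\rb\geq 4$, which combined with the upper bound gives $\rb=4$. I expect the main obstacle to be the two-stage merging in the upper-bound argument: one must verify carefully that each merged coloring is \emph{exact} (all three merged classes non-empty), that rainbow-freeness transfers in the right direction through the merge, and that the symmetry $c(a)=c(-a)$ really does upgrade the size-$1$ conclusion of Corollary \ref{cor:smallestcolorclass} to the conclusion that the singleton class \emph{is} $\{0\}$.
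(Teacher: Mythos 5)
Your proof is correct, and its overall skeleton matches the paper's: both use Proposition \ref{cor: evenk2comp} together with Theorem \ref{thm:classifyrainbowfree3colorings} to rule out rainbow-free exact $3$-colorings when $G_p^k$ has only two components (giving $\rb=3$), and both build the rainbow-free $3$-coloring from the $\geq 3$ components, using $(-a)^k=a^k$ to ensure $c(a)=c(-a)$, to get $\rb\geq 4$ otherwise. Where you genuinely diverge is the upper bound $\rb\leq 4$. The paper makes a single, targeted merge: given an exact $4$-coloring, it combines \emph{the class containing $0$} with another class; in the resulting exact $3$-coloring $0$ is no longer alone in its class, so condition (1) of Theorem \ref{thm:classifyrainbowfree3colorings} fails, a rainbow solution exists for the merged coloring, and (since distinct merged colors come from distinct original colors) also for the $4$-coloring. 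You instead run a two-stage merge that never invokes the classification theorem: the first merge plus Corollary \ref{cor:smallestcolorclass} and Lemma \ref{lemma:negativesamecolor} forces one of the unmerged classes to be $\{0\}$, and the second merge then produces a rainbow-free exact $3$-coloring all of whose classes have size at least $2$, contradicting Corollary \ref{cor:smallestcolorclass}. Your version is sound (the direction of rainbow-freeness under merging is handled correctly, and the singleton-class-is-$\{0\}$ step is exactly the negation-symmetry argument behind Corollary \ref{cor:0coloreduniquely}, valid since $p$ is odd), and it has the mild appeal of resting only on the earlier counting and symmetry lemmas; the paper's route is shorter because choosing the $0$-class for the merge lets the classification theorem do all the work in one step.
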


\begin{proof}
Let $k= 2^{\alpha_0} q_1^{\alpha_1}q_2^{\alpha_2} \ldots q_\ell^{\alpha_\ell}$, $q_i$ prime for $1 \leq i \leq \ell$, $\alpha_i \geq 1$. 

Suppose  $p-1= 2^{\beta_0}  q_1^{\beta_1}q_2^{\beta_2} \ldots q_\ell^{\beta_\ell}$. By Corollary \ref{cor: evenk2comp}, the digraph $G_p^k$ has exactly two components. Let $c: \mathbb{Z}_p \rightarrow \{R, G, B\}$ be an exact 3-coloring. Since the components of $G_p^k$ are not monochromatic, by Theorem \ref{thm:classifyrainbowfree3colorings}, $c$ contains a rainbow solution to $x-y=z^k$. Thus $\rb(\mathbb{Z}_p, x-y=z^k) = 3$. 

Now suppose  
$p-1 \neq 2^{\beta_0}  q_1^{\beta_1}q_2^{\beta_2} \ldots q_\ell^{\beta_\ell}$. By Corollary \ref{cor: evenk2comp}, the digraph $G_p^k$ has at least 3 components. Define a 3-coloring $c: \mathbb{Z}_p \rightarrow \{R,G,B\}$ as follows:
\[c(a) = \begin{cases}
    R & \text{ if } a=0,\\
    B & \text{ if $a$ is in the same component as 1},\\
    G & \text{ otherwise.}
\end{cases}\]

Since $k$ is even, $(a,a^k)$ and $(-a,a^k)$ are edges in $G_p^k$. Thus $a$ and $-a$ are in the same component for all $a$. 
Since each component is monochromatic, $0$ is in its own color class, and $c(a)=c(-a)$ for all $a$, by \ref{thm:classifyrainbowfree3colorings}, $c$ does not contain a rainbow-solution to $x-y=z^k$. Thus $\rb(\mathbb{Z}_p, x-y=z^k) \geq 4$. 

Suppose $c: \mathbb{Z}_p \rightarrow \{R,B, G,Y\}$ is an exact 4-coloring. Construct  an exact 3-coloring $\overline c: \mathbb{Z}_p  $ by combining the color class that contains 0 with another color class. Since $0$ is not in its own color class in $\overline c$, by \ref{thm:classifyrainbowfree3colorings}, $\overline c$ contains a rainbow solution  to $x-y=z^k$. By construction $c$ also contains a rainbow solution and so every exact 4-coloring contains a rainbow solution. Thus $\rb(\mathbb{Z}_p, x-y=z^k)\leq 4$. 

\end{proof}
As a consequence the rainbow-number of $\mathbb{Z}_p$ for $x-y=z^2$ is 3 if and only if $p$ is a Fermat prime.

When an odd digraph has at least three components in $\mathbb{Z}_p$, we can give a rainbow-free 3-coloring of $\mathbb{Z}_p$ by coloring the component with $0$ using one color, the components containing $1$ and $p-1$ with a second color, and coloring everything else a third color.

\begin{thm}
Suppose $k=  q_1^{\alpha_1}q_2^{\alpha_2} \ldots q_\ell^{\alpha_\ell}$, $q_i \geq 3$ prime for $1 \leq i \leq \ell$, $\alpha_i \geq 1$, with $k>3$.  

\[\rb(\mathbb{Z}_p, x-y=z^k) = \begin{cases}3 & \text{ if } p-1 = 2^{\beta_0} q_1^{\beta_1}q_2^{\beta_2} \ldots q_\ell^{\beta_\ell} \text{ where }\beta_i \geq 0,  \\ 
4 & \text{ otherwise.} \end{cases}  \]
    
\end{thm}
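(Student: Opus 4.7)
The proof will closely mirror the even-$k$ theorem above, with Proposition \ref{cor: oddk3comp} playing the role of Proposition \ref{cor: evenk2comp}. The two cases align exactly with the dichotomy ``$G_p^k$ has exactly three components'' versus ``$G_p^k$ has at least four components,'' and in both cases the decisive tool will be the structural classification in Theorem \ref{thm:classifyrainbowfree3colorings}.

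For the first case, I assume $p-1=2^{\beta_0}q_1^{\beta_1}\cdots q_\ell^{\beta_\ell}$, so that Proposition \ref{cor: oddk3comp} gives exactly three components of $G_p^k$: the singleton $\{0\}$ (since $\Z_p$ is a field), the component $C_1$ containing $1$, and the component $C_{-1}$ containing $-1$, with $-1\neq 1$ because $p$ is odd. I plan to show that no rainbow-free exact $3$-coloring exists. Such a coloring would have to put $\{0\}$ alone in a color class (condition (1) of Theorem \ref{thm:classifyrainbowfree3colorings}) and color each of $C_1$, $C_{-1}$ monochromatically (condition (2)), so exactness would force $C_1$ and $C_{-1}$ to receive the two remaining colors and hence $c(1)\neq c(-1)$. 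But for odd $k$, $a^{k^n}=1$ implies $(-a)^{k^n}=-1$, so negation carries $C_1$ bijectively to $C_{-1}$, and condition (3), $c(a)=c(-a)$, therefore demands $c(1)=c(-1)$, a contradiction. Hence every exact $3$-coloring contains a rainbow solution and $\rb(\Z_p,x-y=z^k)=3$.

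For the second case, assume $p-1$ is not of the listed form, so Proposition \ref{cor: oddk3comp} yields at least four components. I will exhibit the rainbow-free exact $3$-coloring flagged in the paragraph preceding the statement: color $0$ red, every element of $C_1\cup C_{-1}$ blue, and every remaining element green; having at least four components guarantees the green class is nonempty, so the coloring is exact. Conditions (1) and (2) of Theorem \ref{thm:classifyrainbowfree3colorings} hold by construction, and for (3) I will verify that negation preserves each color class. Negation fixes $\{0\}$ and swaps $C_1$ with $C_{-1}$, so the blue class is preserved; for a green vertex $a$ lying in a component whose cycle vertex is some $b\notin\{0,1,-1\}$, the vertex $-a$ sits in the component whose cycle vertex is $-b\notin\{0,1,-1\}$, so $-a$ is also green. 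Applying Theorem \ref{thm:classifyrainbowfree3colorings} then gives $\rb(\Z_p,x-y=z^k)\geq 4$.

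For the matching upper bound I will reuse the merging argument from the even-$k$ proof: given any exact $4$-coloring $c$, merge the class containing $0$ with any other class to obtain an exact $3$-coloring $\overline{c}$ in which $0$ is no longer alone; Theorem \ref{thm:classifyrainbowfree3colorings} produces a rainbow triple for $\overline{c}$, and since the merged classes correspond to distinct colors of $c$, that triple is also rainbow for $c$. The only genuinely new work beyond the even-$k$ case is the verification of condition (3) for the explicit $3$-coloring in Case~2, which reduces to the observation that for odd $k$ the map $a\mapsto -a$ acts on the components of $G_p^k$ by sending a component with cycle vertex $b$ to the component with cycle vertex $-b$; this is the main technical point, and once it is in hand the remainder of the argument is essentially identical to the even-$k$ theorem.
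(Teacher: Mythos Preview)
Your proposal is correct and follows essentially the same approach as the paper: both cases hinge on Proposition~\ref{cor: oddk3comp} and the structural criterion of Theorem~\ref{thm:classifyrainbowfree3colorings}, with the explicit rainbow-free $3$-coloring and the $4$-to-$3$ merging argument handled identically. Your verification of condition~(3) in Case~2 is actually more explicit than the paper's (which only treats the blue class and then asserts $c(a)=c(-a)$ for all $a$); one small imprecision is that a component's cycle need not be a single vertex, but since your real point is that $a\mapsto -a$ is a digraph automorphism of $G_p^k$ for odd $k$ and hence permutes components, the argument goes through unchanged.
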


\begin{proof}
Let $k= q_1^{\alpha_1}q_2^{\alpha_2} \ldots q_\ell^{\alpha_\ell}$, $q_i$ prime for $1 \leq i \leq \ell$, $\alpha_i \geq 1$. 

    Suppose $p-1= 2^{\beta_0} q_1^{\beta_1}q_2^{\beta_2} \ldots q_\ell^{\beta_\ell}$ where $\beta_i \geq 0.$ By \ref{cor: oddk3comp}, the digraph $G_p^k$ has 3 components. Since $1$ and $-1$ are both cycle vertices in $G_p^k$,  $1$ and $-1$ are in distinct components.  Let $c: \mathbb{Z}_p \rightarrow \{R, G, B\}$ be an exact 3-coloring. If the components are monochromatic, each component must be a distinct color. In particular, $c(1) \neq c(-1)$, so \ref{thm:classifyrainbowfree3colorings} shows that there exists a rainbow solution to $x-y=z^k$ in $c$. Otherwise, the components are not chromatic, and again there is a rainbow solution to $x-y=z^k$. Thus $\rb(\mathbb{Z}_p, x-y=z^k) =3$.

    Now suppose  
$p-1 \neq 2^{\beta_0}  q_1^{\beta_1}q_2^{\beta_2} \ldots q_\ell^{\beta_\ell}$. By Corollary \ref{cor: oddk3comp}, the digraph $G_p^k$ has at least 4 components and  $1$ and $-1$ are in distinct components. Define a 3-coloring $c: \mathbb{Z}_p \rightarrow \{R,G,B\}$ as follows:
\[c(a) = \begin{cases}
    R & \text{ if } a=0,\\
    B & \text{ if $a$ is in the same component as 1 or -1},\\
    G & \text{ otherwise.}
\end{cases}\]

Suppose that $(a, a^k) $ is an edge in the component containing 1. Then $(-a, -a^k) $ is an edge in the component containing $-1$. Thus $c(a)= c(-a)$ for all $a \in \mathbb{Z}_p$. Furtheremore component is monochromatic and $0$ is in its own color class, and so by  \ref{thm:classifyrainbowfree3colorings}, $c$ does not contain a rainbow-solution to $x-y=z^k$. Thus $\rb(\mathbb{Z}_p, x-y=z^k) \geq 4$.

Let $c: \mathbb{Z}_p \rightarrow {R,B, G,Y}$ be an exact 4-coloring. We can create a new exact 3-coloring $\overline{c}: \mathbb{Z}_p$ by combining the color class that contains 0 with another color class. Since $0$ is not in its own color class in $\overline{c}$, according to \ref{thm:classifyrainbowfree3colorings}, $\overline{c}$ contains a rainbow solution to $x-y=z^k$.

By construction, $c$ also contains a rainbow solution. Therefore, we can conclude that every exact 4-coloring contains a rainbow solution, and thus $\rb(\mathbb{Z}_p, x-y=z^k)\leq 4$. Hence, we have shown that $\rb(\mathbb{Z}_p, x-y=z^k)= 4$.
\end{proof}

\bibliographystyle{plain} 
\bibliography{main} 

\end{document}